\DeclarePairedDelimiter\abs{\lvert}{\rvert}
\theoremstyle{plain} 
\newtheorem{theorem}{Theorem}[section] 
\newtheorem{lemma}[theorem]{Lemma} 
\newtheorem{prop}[theorem]{Proposition} 
\newtheorem{cor}[theorem]{Corollary}
\theoremstyle{remark} 
\newtheorem{remark}[theorem]{Remark}
\theoremstyle{definition} 
\newtheorem{definition}[theorem]{Definition}
\numberwithin{equation}{section}
\title[Simple Cuntz-Pimsner rings]{Simple Cuntz-Pimsner rings}
\author{Toke Meier Carlsen} 
\address{Department of Mathematical Sciences\\
	NTNU\\
	NO-7491 Trondheim\\
	Norway } 
\email{Toke.Meier.Carlsen@math.ntnu.no} 
\author{Eduard Ortega} 
\address{Department of Mathematical Sciences\\
	NTNU\\
	NO-7491 Trondheim\\
	Norway }  
\email{eduardor@math.ntnu.no} 
\author{Enrique Pardo} 
\address{Departamento de Matem\'aticas, Facultad de Ciencias\\
	Universidad de C\'adiz, Campus de Puerto Real\\
	11510 Puerto Real (C\'adiz)\\
	Spain.} 
\email{enrique.pardo@uca.es}
\urladdr{https://sites.google.com/a/gm.uca.es/enrique-pardo-s-home-page/} 
\thanks{This research was supported by the NordForsk Research Network "Operator Algebras and Dynamics"" (grant \#11580). The first named author was partly supported by The Danish Natural Science Research Council and the Research Council of Norway, and the second named author was partially supported by MEC-DGESIC (Spain) through Project MTM2008-06201-C02-01/MTM. The third author was partially supported by the DGI and European Regional Development Fund, jointly, through Project MTM2008-06201-C02-02 and by PAI III grants FQM-298 and P07-FQM-02467 of the Junta de Andaluc\'{\i}a. The second and third authors were partially supported by the Consolider Ingenio "Mathematica" project CSD2006-32 by the MEC and by 2009 SGR 1389 grant of the Comissionat per Universitats i Recerca de la Generalitat de Catalunya.} 
\subjclass[2000]{Primary 16D25, 16D70; Secondary 16S10, 16S35, 16S99, 46L55} 
\keywords{Cuntz-Pimsner
rings, simplicity, invariant cycles, condition (K), condition (L), Cuntz-Krieger uniqueness, Toeplitz rings, Leavitt path algebras, crossed products, fractional skew monoid rings} 
\date{\today}
\renewcommand{\implies}{\Rightarrow}
\newcommand{\OCK}{\mathcal{O}_{(P,Q,\psi)}(J)} 
\newcommand{\FCK}[1][0]{\OCK^{(#1)}} 
\newcommand{\Z}{\mathbb{Z}} 
\newcommand{\N}{\mathbb{N}} 
\newcommand{\No}{{\mathbb{N}_{0}}} 
\newcommand{\toeplitz}{\mathcal{T}_{(P,Q,\psi)}}
\DeclareMathOperator{\spn}{span} \DeclareMathOperator{\id}{Id}
\begin{document}
\begin{abstract}
	Necessary and sufficient conditions for when every non-zero ideal in a relative Cuntz-Pimsner ring contains a non-zero graded ideal, when a relative Cuntz-Pimsner ring is simple, and when every ideal in a relative Cuntz-Pimsner ring is graded, are given. A ``Cuntz-Krieger uniqueness theorem'' for relative Cuntz-Pimsner rings is also given and condition (L) and condition (K) for relative Cuntz-Pimsner rings are introduced. 
\end{abstract}

\maketitle

\section{Introduction}

In \cite{CaOr2011} the two first named authors introduced the notion of a relative Cuntz-Pimsner ring $\OCK$ as an algebraic analogue of (relative) Cuntz-Pimsner $C^*$-algebras (see for example \cite{MuSo1998}, \cite{Pi1997}, \cite{Ka2004a} and \cite{Ka2007}), and showed that for instance Leavitt path algebras (see for example \cite{AbAr2005}, \cite{AbAr2008} and \cite{To2007}), crossed products of a ring by a single automorphism (also called a skew group ring, see for example \cite{Mo1980} and \cite{Pa1989}) and fractional skew monoid rings of a single corner isomorphism (see \cite{ArGoGo2004}) can be constructed as relative Cuntz-Pimsner rings. They also gave a complete description of the graded ideals of an arbitrary relative Cuntz-Pimsner ring $\mathcal{O}_{(P,Q,\psi)}(J)$. The purpose of this paper is to study the non-graded ideals of such a relative Cuntz-Pimsner ring $\OCK$. Although we do not reach a complete description of all (graded or non-graded) ideals of $\OCK$, we do find necessary and sufficient conditions for when every non-zero ideal in $\OCK$ contains a non-zero graded ideal (Theorem \ref{thm:L}), when $\OCK$ is simple (Theorem \ref{thm:simple}), and when every ideal in $\OCK$ is graded (Theorem \ref{thm:K}). We also give a ``Cuntz-Krieger uniqueness theorem'' for $\OCK$ (Theorem \ref{thm:CK}) and introduce condition (L) (Definition \ref{def:L}) and condition (K) (Definition \ref{def:K}) for relative Cuntz-Pimsner rings. These results and definitions are generalizations of similar results and definitions about Lea\-vitt path algebras given in \cite{To2007}, and analogues of similar results and definitions given in the $C^*$-algebraic setting for graph $C^*$-algebras (see for example \cite{Ra2005}), ultragraph $C^*$-algebras (see \cite{To2003d}), topological graph $C^*$-algebras (see \cite{Ka2006b}), and (relative) Cuntz-Krieger algebras of finitely aligned higher rank graphs (see for example \cite{Si2006}).

It is worth pointing out that analogues in the $C^*$-algebraic setting of these results do not exist in the generality of this paper. It does not seem unreasonable to believe that it should be possible to obtain such, but a different approach than the one used in this paper seems to be needed.

\subsection*{Contents}

Section \ref{sec:preliminaries} contains some preliminary results and the pivotal Proposition \ref{prop:main}. In Section \ref{sec:condition-L} condition (L) is introduced (Definition \ref{def:L}), and sufficient and necessary conditions for when every non-zero ideal in $\OCK$ contains a non-zero graded ideal are given (Theorem \ref{thm:L}). Section \ref{sec:cuntz-krieg-uniq} contains the Cuntz-Krieger uniqueness theorem (Theorem \ref{thm:CK}). In Section \ref{sec:simplicity-ock} sufficient and necessary for when $\OCK$ is simple are given (Theorem \ref{thm:simple}), and in Section \ref{sec:condition-k} condition (K) is introduced (Definition \ref{def:K}), and sufficient and necessary conditions for when every ideal in $\OCK$ is graded are given (Theorem \ref{thm:K}). In Section \ref{sec:toeplitz-rings} the case when $J=0$ and $\OCK$ is the Toeplitz ring $\toeplitz$ of $(P,Q,\psi)$ is considered. Finally, in Section \ref{sec:leavitt} and Section \ref{sec:cross} we illustrate the results obtained in the paper by applying them to Leavitt path algebras (Section \ref{sec:leavitt}), and to crossed products of a ring by a single automorphism and fractional skew monoid rings of a single corner isomorphism (Section \ref{sec:cross}), and thereby obtain characterizations of when these algebras are simple. The characterization of when a Leavitt path algebra is simple is well-know (see \cite[Theorem 6.18]{To2007}), whereas the characterizations of when a crossed product of a ring by an automorphism and a fractional skew monoid ring by a corner isomorphism are simple, to the knowledge of the authors, are new.

\subsection*{Notation and conventions}

In this paper every ideal will be a two-sided ideal. The set of integers will be denoted by $\Z$, the set of positive integers will be denoted by $\N$ and the set of non-negative integers will be denoted by $\No$.

We will use the same notation as in \cite{CaOr2011} with the addition that $R$ will always denote a fixed ring, $(P,Q,\psi)$ will be a fixed $R$-system satisfying condition (FS) and $J$ will be a fixed faithful and $\psi$-compatible ideal in $R$. To ease notation we will let $\sigma$, $S$, $T$ and $\pi$ denote $\iota_R^J$, $\iota_P^J$, $\iota_Q^J$ and $\pi^J$, respectively. We will repeatedly use that since $(P,Q,\psi)$ satisfies condition (FS), the $R$-system $(P^{\otimes n},Q^{\otimes n},\psi_n)$ will for each $n\in\N$ also satisfy condition (FS) (see \cite[Lemma 3.8]{CaOr2011}).

\section{Preliminaries} \label{sec:preliminaries}

This section contains some preliminary results leading to Proposition \ref{prop:main}, which is pivotal for the rest of the paper.
\begin{lemma}
	\label{lemma:1} If $n\in\N$, $x_{-n}\in\FCK[-n]\setminus\{0\}$ and $x_n\in\FCK[n]\setminus\{0\}$, then there is a $p\in P^{\otimes n}$ and a $q\in Q^{\otimes n}$ such that $x_{-n}T^n(q)\ne 0$ and $S^n(p)x_n\ne 0$. 
\end{lemma}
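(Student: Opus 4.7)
The claim has two formally parallel halves. I will focus on showing the existence of $p \in P^{\otimes n}$ with $S^n(p) x_n \ne 0$; the analogous statement for $x_{-n} T^n(q)$ is handled by the dual (right-sided) version of the same argument, exchanging the roles of $P, S$ with $Q, T$ and left multiplication with right multiplication.

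I would argue by contradiction: suppose $S^n(p) x_n = 0$ for every $p \in P^{\otimes n}$. The idea is to convert this into a left-annihilation of $x_n$ by elements of $\sigma(R)$ by composing with $T^n$ on the left. Using the inner-product-type relation between $S^n$ and $T^n$ in $\OCK$ --- which holds as a consequence of the defining relations of the $R$-system and reads $T^n(q) S^n(p) = \sigma(\psi_n(q \otimes p))$ (up to the conventions of \cite{CaOr2011}) --- we obtain $\sigma(\psi_n(q \otimes p)) x_n = 0$ for every $p \in P^{\otimes n}$ and $q \in Q^{\otimes n}$. Equivalently, $\sigma(r) x_n = 0$ for every $r$ lying in the image of $\psi_n$.

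To upgrade this to the vanishing of $x_n$ itself, I would invoke condition (FS) for the tensor-power system $(P^{\otimes n}, Q^{\otimes n}, \psi_n)$, which is valid by \cite[Lemma 3.8]{CaOr2011}. This should supply, for our specific element $x_n$, finite dual families $\{p_i\}_{i=1}^k \subset P^{\otimes n}$ and $\{q_i\}_{i=1}^k \subset Q^{\otimes n}$ such that $e := \sum_{i=1}^k \psi_n(q_i \otimes p_i)$ satisfies $\sigma(e)\, x_n = x_n$: informally, (FS) guarantees that every element of $\FCK[n]$ is finitely generated over $\sigma(R)$ and hence left-supported by a finite sum of values of $\sigma \circ \psi_n$. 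Combining the two steps gives $x_n = \sigma(e) x_n = \sum_i \sigma(\psi_n(q_i \otimes p_i))\, x_n = 0$, contradicting $x_n \ne 0$.

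The principal obstacle is justifying the local identity $\sigma(e)\, x_n = x_n$ from condition (FS). I expect this to emerge from the filtered (direct-limit) presentation of $\FCK[n]$ developed in \cite{CaOr2011}, where a representative of $x_n$ at filtration level $n+k$ is a finite sum of products involving $S^{n+k}$ and $T^k$; (FS) then furnishes the required dual bases at that level, and the local-identity property follows by tracking them through the grading relations. Once this step is in place, the rest is a routine manipulation of the defining ring relations.
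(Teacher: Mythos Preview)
Your overall strategy --- multiply $x_n$ on the left by products $T^n(q)S^n(p)$ and use condition (FS) to produce an element acting as a local left identity on $x_n$ --- is exactly the paper's approach. However, the displayed relation you invoke is wrong, and this is not a matter of conventions: in $\OCK$ one has $S^n(p)T^n(q)=\sigma(\psi_n(p\otimes q))$, while $T^n(q)S^n(p)=\pi(\theta_{q,p})$ is the image of a rank-one element of $\mathcal{F}_{P^{\otimes n}}(Q^{\otimes n})$ and in general does \emph{not} lie in $\sigma(R)$. Consequently your conclusion ``$\sigma(r)x_n=0$ for all $r$ in the image of $\psi_n$'' does not follow, and the local identity you seek in the form $\sigma(e)x_n=x_n$ with $e\in R$ need not exist (no local units are assumed on $R$).

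The fix is precisely what the paper does: write $x_n=\sum_{i=1}^kT^n(q_i)y_i$ with $q_i\in Q^{\otimes n}$ and $y_i\in\FCK$, and use condition (FS) for $(P^{\otimes n},Q^{\otimes n},\psi_n)$ to find $\theta=\sum_j\theta_{q'_j,p'_j}\in\mathcal{F}_{P^{\otimes n}}(Q^{\otimes n})$ with $\theta q_i=q_i$ for each $i$. Then $\pi(\theta)x_n=\sum_jT^n(q'_j)S^n(p'_j)x_n=x_n\ne 0$, forcing $S^n(p'_j)x_n\ne 0$ for some $j$. No filtered or direct-limit presentation is needed; the decomposition $x_n=\sum_iT^n(q_i)y_i$ is immediate from the description of $\FCK[n]$. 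So the gap in your argument is the mis-identification of $T^n(q)S^n(p)$ with an element of $\sigma(R)$; once you replace $\sigma(e)$ by $\pi(\theta)$, your proof coincides with the paper's.
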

\begin{proof}
	Write $x_n$ as $\sum_{i=1}^kT^n(q_i)y_i$ where $q_i\in Q^{\otimes n}$ and $y_i\in\FCK$ for $i=1,2,\dots, k$. It follows from condition (FS) that there is a $\theta\in\mathcal{F}_{P^{\otimes n}}(Q^{\otimes n})$ such that $\theta q_i=q_i$ for each $i=1,2,\dots,k$. It follows that $S^n(p)x_n$ cannot be 0 for all $p\in P^{\otimes n}$. That $x_{-n}T^n(q)\ne 0$ for some $q\in Q^{\otimes n}$ can be proved in a similar way. 
\end{proof}
\begin{definition}
	For an ideal $I$ in $R$, let $\psi^{-1}(I)$ be the ideal 
	\begin{equation*}
		\bigl\{x\in R\mid \psi(px\otimes q)\in I\text{ for all }q\in Q\text{ and all }p\in P\bigr\}, 
	\end{equation*}
	and let $I^{[\infty]}$ be the ideal 
	\begin{equation*}
		\bigcap_{k=1}^\infty I^{[k]} 
	\end{equation*}
	where $I^{[k]}$ is defined recursively by $I^{[1]}=I$ and $I^{[k]}=\psi^{-1}\bigl(I^{[k-1]}\bigr)\cap I$ for $k> 1$. 
\end{definition}

Recall that if $I$ is an ideal in $R$, then $QI=\spn\{qx\mid q\in Q,\ x\in I\}$ (see \cite[Definition 7.1]{CaOr2011}).
\begin{lemma}
	\label{lemma:2} Let $x\in R$. Then $x\in\psi^{-1}(I)$ if and only if $xq\in QI$ for all $q\in Q$. 
\end{lemma}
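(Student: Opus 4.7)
The plan is to exploit the $R$-bilinearity of $\psi: P \otimes_R Q \to R$. Because $\psi$ factors through the balanced tensor product, the identity
\[
\psi(px \otimes q) = \psi(p \otimes xq)
\]
holds for all $p \in P$, $q \in Q$ and $x \in R$, and both implications will pivot on it.

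For the ``if'' direction, I would fix $p \in P$ and $q \in Q$, assume $xq \in QI$, and expand $xq$ as a finite sum $\sum_j q_j y_j$ with $q_j \in Q$ and $y_j \in I$. Right $R$-linearity of $\psi$ then gives
\[
\psi(px \otimes q) = \psi(p \otimes xq) = \sum_j \psi(p \otimes q_j)\, y_j,
\]
and the right-hand side lies in $R \cdot I \subseteq I$ since $I$ is an ideal. Hence $x \in \psi^{-1}(I)$.

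For the ``only if'' direction, I would fix $q \in Q$ and apply condition (FS) to the single element $xq \in Q$ to obtain $\theta \in \mathcal{F}_{P}(Q)$ with $\theta(xq) = xq$ (the same device already used in Lemma \ref{lemma:1}). Writing $\theta$ as a finite sum of rank-one operators of the form $q' \mapsto a\cdot\psi(b \otimes q')$, say $\theta = \sum_i \theta_{a_i,b_i}$ with $a_i \in Q$ and $b_i \in P$, gives
\[
xq = \theta(xq) = \sum_i a_i\, \psi(b_i \otimes xq) = \sum_i a_i\, \psi(b_i x \otimes q).
\]
Since $x \in \psi^{-1}(I)$, each scalar $\psi(b_i x \otimes q)$ lies in $I$, so $xq \in QI$ as required.

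The only real obstacle is the bookkeeping around $\mathcal{F}_{P}(Q)$: one has to recall from \cite{CaOr2011} the precise form of the generating rank-one operators and the compatibility of the $R$-actions, and then verify that condition (FS) indeed supplies a $\theta$ reproducing the chosen element. Once those definitional matters are unwound, each direction reduces to a single application of balanced-ness together with the ideal property of $I$.
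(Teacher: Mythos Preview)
Your proposal is correct and follows essentially the same route as the paper's proof: the ``if'' direction is identical (expand $xq$ in $QI$ and use right $R$-linearity of $\psi$), and the ``only if'' direction is also the same use of condition (FS) to write $xq=\sum_i q_i\psi(p_i\otimes xq)$ and then observe each $\psi(p_i\otimes xq)=\psi(p_ix\otimes q)\in I$. The only cosmetic difference is that the paper leaves the balanced-tensor identity $\psi(p\otimes xq)=\psi(px\otimes q)$ implicit in the ``only if'' direction, whereas you state it explicitly.
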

\begin{proof}
	Assume first that $x\in\psi^{-1}(I)$ and that $q\in Q$. Then it follows from condition (FS) that there are $q_1,\dots,q_m\in Q$ and $p_1,\dots,p_m\in P$ such that $xq=\sum_{i=1}^mq_i\psi(p_i\otimes xq)$. Since each $\psi(p_i\otimes xq)\in I$, it follows that $xq\in QI$.
	
	Assume then that $x\in R$ and $xq\in QI$ for all $q\in Q$, and let $q\in Q$ and $p\in P$. Then there are $q_1,\dots q_m\in Q$ and $x_1,\dots,x_m\in I$ such that $xq=\sum_{i=1}^mq_ix_i$, from which it follows that $\psi(px\otimes q)=\psi(p\otimes xq)=\sum_{i=1}^m\psi(p\otimes q_i)x_i\in I$. Thus $x\in \psi^{-1}(I)$.
\end{proof}

Let us now specialise to the case where $I=J$.
\begin{lemma}
	\label{lemma:3} Let $k\in\N$ and $x\in R$. Then $x\in J^{[k]}$ if and only if $\sigma(x)\in\spn\{T^k(q)S^k(p)\mid q\in Q^{\otimes k}, p\in P^{\otimes k}\}$. 
\end{lemma}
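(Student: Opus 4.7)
I plan to proceed by induction on $k$.

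\textbf{Base case ($k = 1$).} Since $J^{[1]} = J$, the forward direction follows from the defining Cuntz-Pimsner relation of $\OCK$: by $\psi$-compatibility of $J$, for each $x \in J$ there are $q_i \in Q$ and $p_i \in P$ with $\sigma(x) = \sum_i T(q_i) S(p_i)$, so $\sigma(x) \in \spn\{T(q) S(p)\}$. The reverse direction would come from the structural description of $\OCK$ in \cite{CaOr2011} together with the injectivity of $\sigma$ (valid since $J$ is faithful), which identifies $\sigma(R) \cap \spn\{T(q) S(p)\}$ with $\sigma(J)$.

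\textbf{Inductive step, forward direction ($\Rightarrow$).} Assume the result for $k-1$ and let $x \in J^{[k]}$. The base case gives $\sigma(x) = \sum_\ell T(q_\ell) S(p_\ell)$. Using condition (FS) to find $c_t \in Q$, $d_t \in P$ with $\sum_t c_t \psi(d_t \otimes q_\ell) = q_\ell$ for each $\ell$, I would compute
\begin{equation*}
\sigma(x) = \sum_t T(c_t) S(d_t) \sigma(x) = \sum_t T(c_t) S(d_t x).
\end{equation*}
Then, using the $P$-side form of (FS) applied to the finite set $\{d_t x\} \subseteq P$, I would get $q_v \in Q$, $p_v \in P$ with $\sum_v \psi(d_t x \otimes q_v) p_v = d_t x$, yielding $S(d_t x) = \sum_v \sigma(z_{t,v}) S(p_v)$ where $z_{t,v} := \psi(d_t x \otimes q_v) = \psi(d_t \otimes x q_v)$. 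By Lemma~\ref{lemma:2}, $x q_v \in Q J^{[k-1]}$, so each $z_{t,v} \in J^{[k-1]}$; the inductive hypothesis then places each $\sigma(z_{t,v})$ in $\spn\{T^{k-1}(\alpha) S^{k-1}(\beta)\}$. Substituting and telescoping the resulting $T$- and $S$-products into elements of $T^k(Q^{\otimes k})$ and $S^k(P^{\otimes k})$ gives $\sigma(x) \in \spn\{T^k(q) S^k(p)\}$.

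\textbf{Inductive step, reverse direction ($\Leftarrow$).} Suppose $\sigma(x) = \sum_i T^k(q_i) S^k(p_i)$. Isolating the first tensor factor of each $q_i$ as $q_i^{(1)} \in Q$ and applying (FS) to $\{q_i^{(1)}\}$ gives $c_t, d_t$ with $\sum_t c_t \psi(d_t \otimes q_i^{(1)}) = q_i^{(1)}$; a direct computation then shows $\sum_t T(c_t) S(d_t) T^k(q_i) = T^k(q_i)$, so
\begin{equation*}
\sigma(x) = \sum_t T(c_t) S(d_t) \sigma(x) = \sum_t T(c_t) S(d_t x) \in \spn\{T(q) S(p)\},
\end{equation*}
and the base case gives $x \in J$. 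For the condition $x \in \psi^{-1}(J^{[k-1]})$, for any $p \in P$ and $q \in Q$ I would use the identity $\sigma(\psi(p x \otimes q)) = S(p) \sigma(x) T(q) = \sum_i S(p) T^k(q_i) S^k(p_i) T(q)$; applying the relation $S(a) T(b) = \sigma(\psi(a \otimes b))$ to absorb the outer $S(p) \cdot T(q_i^{(1)})$ junction and the corresponding outer $S$-$T(q)$ junction in each summand reduces the right-hand side to an element of $\spn\{T^{k-1}(\alpha) S^{k-1}(\beta)\}$. The inductive hypothesis then yields $\psi(p x \otimes q) \in J^{[k-1]}$ for all $p, q$, i.e., $x \in \psi^{-1}(J^{[k-1]})$, so $x \in J^{[k]}$.

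\textbf{Main obstacle.} The principal technical difficulty will be using condition (FS) on both the $P$- and $Q$-sides of $(P, Q, \psi)$ simultaneously and managing the tensor-factor ordering conventions for $T^k$ and $S^k$. The forward direction depends critically on the $P$-side version of (FS) to rewrite $S(d_t x)$ as $\sum \sigma(z_{t,v}) S(p_v)$ with $z_{t,v} \in J^{[k-1]}$; in both directions the frames must be chosen so that multiplication by the corresponding finite sum preserves $\sigma(x)$ and so that the resulting $T$- and $S$-products telescope cleanly into the required $T^k \cdot S^k$ form.
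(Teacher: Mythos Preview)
Your proposal is correct and follows essentially the same inductive strategy as the paper's proof: induction on $k$, with the forward step combining condition (FS) with Lemma~\ref{lemma:2} to pull out a factor in $J^{[k-1]}$, and the reverse step computing $\sigma(\psi(px\otimes q))=S(p)\sigma(x)T(q)$ to reduce to the induction hypothesis. Two minor tactical differences: in the forward direction the paper applies (FS) only on the $P$-side to obtain $\sigma(x)=\sum_j T(xq'_j)S(p'_j)$ directly (your extra $Q$-side step is harmless but unnecessary), and in the reverse direction the paper deduces $x\in J$ from \cite[Lemma 3.21]{CaOr2011} via the Toeplitz kernel rather than your (equally valid) reduction to the base case via (FS).
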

\begin{proof}
	We will prove the lemma by induction over $k$. For $k=1$ the lemma follows from \cite[Proposition 3.28]{CaOr2011}.
	
	Assume now that $k>1$ and that $x\in J^{[k-1]}$ if and only if $\sigma(x)\in\spn\{T^{k-1}(q)S^{k-1}(p)\mid q\in Q^{\otimes k-1}, p\in P^{\otimes k-1}\}$. We will then prove that $x\in J^{[k]}$ if and only if $\sigma(x)\in\spn\{T^k(q)S^k(p)\mid q\in Q^{\otimes k},\ p\in P^{\otimes k}\}$ for all $x\in R$. If $x\in J^{[k]}=\psi^{-1}(J^{[k-1]})\cap J$, then it follows from \cite[Proposition 3.28]{CaOr2011} that there are $q_1,\dots,q_m\in Q$ and $p_1,\dots,p_m\in P$ such that $\sigma(x)=\sum_{i=1}^mT(q_i)S(p_i)$. It follows from condition (FS) that there are $q'_1,\dots,q'_n\in Q$ and $p'_1,\dots,p'_n\in P$ such that $\sum_{j=1}^n\theta_{p'_j,q'_j}p_i=p_i$ for each $i$, from which it follows that 
	\begin{equation*}
		\sigma(x)=\sum_{i=1}^mT(q_i)S(p_i)=\sum_{i=1}^m\sum_{j=1}^nT(q_i)S(p_i)T(q'_j)S(p'_j) =\sum_{j=1}^nT(xq'_j)S(p'_j). 
	\end{equation*}
	It follows from Lemma \ref{lemma:2} that there for each $j$ are $q_{j,1}\dots,q_{j,m_j}\in Q$ and $x_{j,1},\dots,x_{j,m_j}\in J^{[k-1]}$ such that $xq_j=\sum_{l=1}^{m_j}q_{j,l}x_{j,l}$, and it then follows from the induction hypothesis that 
	\begin{equation*}
		\begin{split}
			\sigma(x)&=\sum_{j=1}^nT(xq'_j)S(p'_j)\\
			&=\sum_{j=1}^n\sum_{l=1}^{m_j}T(q_{j,l})\sigma(x_{j,l})S(p'_j)\in \spn\{T^k(q)S^k(p)\mid q\in Q^{\otimes k}, p\in P^{\otimes k}\}. 
		\end{split}
	\end{equation*}
	
	Conversely, if $\sigma(x)=\sum_{i=1}^mT^k(q_i)S^k(p_i)$, then $\iota_R(x)-\sum_{i=1}^m\iota_Q^k(q_i)\iota_P^k(p_i)\in\mathcal{T}(J)$ (cf. \cite[Definition 3.15 and 3.16]{CaOr2011}), so it follows from  \cite[Lemma 3.21]{CaOr2011} that $x\in J$. If $p\in P$ and $q\in Q$, then 
	\begin{multline*}
		\sigma\bigl(\psi(px\otimes q)\bigr)=S(p) \sum_{i=1}^mT^k(q_i)S^k(p_i) T(q)\\\in \spn\{T^{k-1}(q')S^{k-1}(p')\mid q'\in Q^{\otimes k-1}, p'\in P^{\otimes k-1}\}, 
	\end{multline*}
	which together with the induction hypothesis implies that $\psi(px\otimes q)\in J^{[k-1]}$, and thus that $x\in \psi^{-1}(J^{[k-1]})\cap J=J^{[k]}$. 
\end{proof}
\begin{definition}
	A subring $A$ of $\mathcal{O}_{(P,Q,\psi)}(J)$ has the \emph{ideal intersection property} if the implication $K\cap A=\{0\}\implies K=\{0\}$ holds for every ideal $K$ in $\OCK$. 
\end{definition}

We of course have that $\OCK$ itself has the ideal intersection property. We will in this paper study when $\sigma(R)$ and $\FCK$ have the ideal intersection property. We begin with $\FCK$.

Let $n\in\N$. Recall from \cite[Section 2]{CaOr2011} that there for each $p\in P$ exists a unique $R$-bimodule homomorphism $S_p:Q^{\otimes n+1}\to Q^{\otimes n}$ characterised by $S_p(q\otimes q_n)=\psi(p\otimes q)q_n$ for $q\in Q$ and $q_n\in Q^{\otimes n}$. Similarly, there exists for each $q_n\in Q^{\otimes n}$ an $R$-bimodule homomorphism $T_{q_n}:Q\to Q^{\otimes n+1}$ given by $T_{q_n}(q)=q_n\otimes q$ for $q\in Q$. Notice that $T^n(S_pT_{q_n}(q))=S(p)T^n(q_n)T(q)$ for $p\in P$, $q_n\in Q^{\otimes n}$ and $q\in Q$.

\begin{prop}
	\label{prop:main} The following 3 conditions are equivalent: 
	\begin{enumerate}
		\item \label{item:1} The subring $\FCK$ does not have the ideal intersection property. 
		\item \label{item:2} There is a non-zero graded ideal $\bigoplus_{k\in\Z}H^{(k)}$ in $\OCK$, an $n\in\N$ and a family $(\phi_k)_{k\in\Z}$ of injective $\FCK$-bimodule homomorphisms $\phi_k:H^{(k)}\to\FCK[k+n]$ such that $x\phi_k(y)=\phi_{k+j}(xy)$ and $\phi_k(y)x=\phi_{k+j}(yx)$ for $k,j\in\Z$, $x\in\FCK[j]$ and $y\in H^{(k)}$. 
		\item \label{item:3} There is a non-zero $\psi$-invariant ideal $I_0$ of $R$, an $n\in\N$ and an injective $R$-bimodule homomorphism $\eta:I_0\to Q^{\otimes n}$ such that $S_pT_{\eta(x)}(q)=\eta(\psi(px\otimes q))$ for $p\in P$, $x\in I_0$ and $q\in Q$, and such that $I_0\subseteq J^{[\infty]}$.
	\end{enumerate}
\end{prop}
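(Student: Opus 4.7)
I shall prove $(1) \Rightarrow (2) \Rightarrow (3) \Rightarrow (1)$.

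For $(1) \Rightarrow (2)$, I take a non-zero ideal $K$ of $\OCK$ with $K \cap \FCK = 0$ and let $n \geq 1$ be the minimum spread (difference between highest and lowest non-zero homogeneous degrees) attained by a non-zero element of $K$. That $n \geq 1$ follows from the hypothesis: a homogeneous non-zero element of $K$ in degree $k$ either lies in $K \cap \FCK = 0$ (when $k = 0$) or, by an argument parallel to that of Lemma \ref{lemma:1}, can be multiplied by a suitable $S^{|k|}(p)$ or $T^{|k|}(q)$ to produce a non-zero element of $K \cap \FCK$ (when $k \neq 0$), contradicting the hypothesis. For each $k \in \Z$, define $H^{(k)} \subseteq \FCK[k]$ as the set of $y$ arising as the degree-$k$ component of some $a \in K$ with support contained in $[k, k+n]$, and $\phi_k(y)$ as the degree-$(k+n)$ component of such an $a$. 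Well-definedness and injectivity of $\phi_k$ both follow from the minimality of $n$: any non-zero difference of two realisers would have spread strictly less than $n$. The $\FCK$-bimodule relations of $\phi_k$ are inherited from the ideal structure of $K$: if $a$ realises $y$ and $z \in \FCK[j]$, then $za$ (resp.\ $az$) realises $zy$ (resp.\ $yz$) with trailing component $z \phi_k(y)$ (resp.\ $\phi_k(y) z$). Non-triviality of $\bigoplus H^{(k)}$ is guaranteed by any $x \in K$ of spread $n$, translated so that $x_0 \neq 0$.

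For $(2) \Rightarrow (3)$, I focus on $\phi_0 : H^{(0)} \to \FCK[n]$ and set $I_0 = \{x \in R : \sigma(x) \in H^{(0)}\}$, which is an ideal of $R$. The $\FCK$-bimodule structure of the family $(\phi_k)_k$, combined with the identity $T^n(S_p T_{q_n}(q)) = S(p) T^n(q_n) T(q)$, is used to show that $\phi_0(\sigma(x)) = T^n(\eta(x))$ for a uniquely determined $\eta(x) \in Q^{\otimes n}$. Applying $\phi_0$ to $S(p) \sigma(x) T(q) = \sigma(\psi(px \otimes q))$ then yields $T^n(S_p T_{\eta(x)}(q)) = T^n(\eta(\psi(px \otimes q)))$, whence the intertwining relation on $\eta$. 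Injectivity of $\eta$ is inherited from $\phi_0$, and $\psi$-invariance of $I_0$ from the bimodule property. For $I_0 \subseteq J^{[\infty]}$, use that the existence and injectivity of $\phi_k$ for every $k \in \N$ forces $\sigma(I_0) \subseteq \spn\{T^k(q) S^k(p) \mid q \in Q^{\otimes k}, p \in P^{\otimes k}\}$, whence $I_0 \subseteq J^{[k]}$ by Lemma \ref{lemma:3}. Non-triviality of $I_0$ requires showing that $H^{(0)}$ meets $\sigma(R)$ non-trivially, which can be done by compressing any $y \in H^{(0)}$ via multiplications by $S^k(p)$ and $T^k(q)$ (using condition (FS)) to reach an element of $\sigma(R)$.

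For $(3) \Rightarrow (1)$, let $K$ be the two-sided ideal of $\OCK$ generated by $\{\sigma(x) - T^n(\eta(x)) : x \in I_0\}$. Non-triviality is immediate: for non-zero $x \in I_0$, the terms $\sigma(x) \in \FCK$ and $T^n(\eta(x)) \in \FCK[n]$ lie in distinct graded pieces, and $\sigma$ is injective by faithfulness of $J$. To see $K \cap \FCK = 0$, construct via the universal property of $\OCK$ a ring endomorphism $\Phi$ of $\OCK$ that is injective on $\FCK$ but annihilates each generator $\sigma(x) - T^n(\eta(x))$; existence of $\Phi$ rests on the intertwining property of $\eta$ and the hypothesis $I_0 \subseteq J^{[\infty]}$, which jointly ensure the identification $\sigma(x) = T^n(\eta(x))$ on $I_0$ is compatible with all defining relations of $\OCK$.

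\textbf{Main obstacle.} The principal difficulty is the construction of the endomorphism $\Phi$ in $(3) \Rightarrow (1)$: verifying that $\Phi$ exists (via the universal property) and remains faithful on $\FCK$ requires exploiting $I_0 \subseteq J^{[\infty]}$ in a non-trivial way. A closely related subtlety in $(2) \Rightarrow (3)$ is the claim $\phi_0(\sigma(x)) \in T^n(Q^{\otimes n})$ rather than merely in $\FCK[n]$, together with the non-triviality of $I_0$; both require an analysis of the structure of $\FCK[n]$ and $\FCK$ using the bimodule property and Lemma \ref{lemma:3}.
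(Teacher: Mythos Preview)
Your plan for $(1)\Rightarrow(2)$ matches the paper's argument essentially line for line.

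In $(2)\Rightarrow(3)$ there is a genuine gap. With your definition $I_0=\{x\in R:\sigma(x)\in H^{(0)}\}$ there is \emph{no} reason for $\phi_0(\sigma(x))$ to lie in $T^n(Q^{\otimes n})$ rather than in the much larger space $\FCK[n]=\spn\{T^{n+k}(q)S^k(p):k\ge 0\}$; the identity $T^n(S_pT_{q_n}(q))=S(p)T^n(q_n)T(q)$ does not force this. The paper fixes this by first setting $I=\{x:\sigma(x)\in H^{(0)}\}$ (your $I_0$), then introducing the filtration $I_m=\{x\in I:\phi_0(\sigma(x))\in A_m\}$ where $A_m=\spn\{T^{n+k}(q)S^k(p):k\le m\}$, and finally taking the bottom piece $I_0$ of this filtration as the ideal in condition (3). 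The point is that $\psi$-invariance pushes $I_{m+1}$ into $I_m$, and an iterated application of the $\phi_k$'s together with Lemma~\ref{lemma:1} shows this bottom piece is still non-zero. Your compression idea for showing $H^{(0)}\cap\sigma(R)\ne 0$ is also too optimistic as stated: the paper instead assumes the intersection is zero, invokes the structure theorem for graded ideals \cite[Theorem 7.27]{CaOr2011} to write $H^{(0)}$ in terms of elements $\sigma(x)-\pi(\Delta(x))$, and obtains a contradiction from $S^n(p')(\sigma(x)-\pi(\Delta(x)))=0$ versus injectivity of $\phi_0$. Finally, your route to $I_0\subseteq J^{[\infty]}$ via ``existence and injectivity of $\phi_k$'' is not the paper's and is not clearly workable; the paper instead uses condition (FS) to write $\eta(x)=\sum\theta_{q_i,p_i}\eta(x)$, deduces $\sigma(x)=\sum T^n(q_i)S^n(p_ix)$, applies Lemma~\ref{lemma:3}, and then uses $\psi$-invariance of $I_0$.

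For $(3)\Rightarrow(1)$ your endomorphism strategy is not the paper's approach and, as you suspect, is the real obstacle. It is unclear how to produce via the universal property an endomorphism of $\OCK$ that identifies $\sigma(x)$ with $T^n(\eta(x))$ yet remains injective on $\FCK$: the quotient by $K$ is not graded, so the graded uniqueness theorem does not apply, and no other covariant representation with the required properties presents itself. The paper avoids this entirely by a direct computation: using the intertwining property of $\eta$ and (FS) one shows that $S(p)(\sigma(x)-T^n(\eta(x)))$ and $(\sigma(x)-T^n(\eta(x)))T(q)$ each lie in the span of elements $T(q')(\sigma(x')-T^n(\eta(x')))S(p')$, so $K$ has an explicit spanning set; one then reduces $K\cap\FCK=0$ to checking that certain finite sums $\sum T^k(q)\sigma(x)S^l(p)$ vanish if and only if the corresponding sums $\sum T^k(q)T^n(\eta(x))S^l(p)$ do. This is where the hypothesis $I_0\subseteq J^{[\infty]}$ enters, via Lemma~\ref{lemma:3}, to rewrite each $\sigma(x)$ as a sum of $T^k(q')S^k(p')$ of matching depth.
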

\begin{proof}
	$\eqref{item:1}\implies\eqref{item:2}$: Let $K$ be a non-zero ideal in $\OCK$ such that $K\cap\FCK=\{0\}$. Let $N$ be the set of $n\in\N_0$ for which there are $x_i\in\FCK[i]$, $i=0,1,\dots,n$ with $x_0\ne 0$ such that $\sum_{i=0}^nx_i\in K$. Let $\sum_{i=j}^kx_i\in K$ where $j\le k\in\Z$, $x_i\in\FCK[i]$ for $i=j,j+1,\dots,k$ and $x_j\ne 0$. If $j\ne 0$, then it follows from Lemma \ref{lemma:1} that there is a $y_{-j}\in\FCK[-j]$ such that either $y_{-j}x_j$ or $x_jy_{-j}$ is non-zero. It follows that $N\ne\emptyset$. Since $K\cap\FCK=\{0\}$, it follows that $0\notin N$. Let $n=\min N$. Then $n\in\N$.
	
	For each $k\in\Z$ let 
	\begin{equation*}
		H^{(k)}:=\left\{x_k\in\FCK[k]\Bigm| \exists x_{k+i}\in\FCK[k+i],\ i=1,2,\dots,n:\sum_{i=0}^nx_{k+i}\in K\right\}. 
	\end{equation*}
	If $x\in H^{(k)}$ and $y\in\FCK[j]$, then $xy,yx\in H^{(k+j)}$. It follows that $\bigoplus_{k\in\Z}H^{(k)}$ is a graded ideal in $\OCK$, and since $H^{(0)}\ne\{0\}$, it must be the case that $\bigoplus_{k\in\Z}H^{(k)}$ is non-zero.
	
	Let $k\in\Z$ and let $x_k\in H^{(k)}$. It follows from Lemma \ref{lemma:1} and the minimality of $n$ that there is a unique $x_{k+n}\in\FCK[k+n]$ satisfying that there exist $x_{k+i}\in\FCK[k+i]$, $i=1,2,\dots,n-1$ such that $\sum_{i=0}^nx_{k+i}\in K$. It also follows from Lemma \ref{lemma:1} and the minimality of $n$ that $x_{k+n}\ne 0$ if $x_k\ne 0$. Thus there is an injective map $\phi_k:H^{(k)}\to\FCK[k+n]$ sending $x_k$ to $x_{k+n}$. It is easy to check that $\phi_k$ is a $\FCK$-bimodule homomorphism, and that $x\phi_k(y)=\phi_{k+j}(xy)$ and $\phi_k(y)x=\phi_{k+j}(yx)$ when $k,j\in\Z$, $x\in\FCK[j]$ and $y\in H^{(k)}$.
	
	$\eqref{item:2}\implies\eqref{item:3}$: We will first prove that $H^{(0)}\cap\sigma(R)\ne\{0\}$, so assume, for contradiction, that $H^{(0)}\cap\sigma(R)=\{0\}$. Then it follows from \cite[Lemma 3.21 and Theorem 7.27]{CaOr2011} that 
	\begin{multline*}
		H^{(0)}=\spn\bigl(\{T^n(q)(\sigma(x)-\pi(\Delta(x)))S^n(p)\mid n\in\N,\ q\in Q^{\otimes n},\ x\in J',\ p\in P^{\otimes n}\}\\\cup\{\sigma(x)-\pi(\Delta(x))\mid x\in J'\}\bigr) 
	\end{multline*}
	for some faithful $\psi$-compatible ideal $J'$ of $R$ which contains $J$. We claim that $H^{(0)}$ must contain a non-zero element of the form $\sigma(x)-\pi(\Delta(x))$, $x\in J'$. To see that this is the case, let $y$ be a non-zero element of $H^{(0)}$ and write it as 
	\begin{equation*}
		\sigma(x_0)-\pi(\Delta(x_0))+\sum_{i=1}^kT^{n_i}(q_i)\bigl(\sigma(x_i)-\pi(\Delta(x_i))\bigr)S^{n_i}(p_i) 
	\end{equation*}
	where $k\in\N$, $x_0,x_1,\dots,x_k\in J'$ and $n_i\in\N$, $q_i\in Q^{\otimes n_i}$, $p_i\in P^{\otimes n_i}$ for each $i\in\{1,2,\dots,k\}$, and assume that $\sum_{i\in M} T^{n_i}(q_i)\bigl(\sigma(x_i)-\pi(\Delta(x_i))\bigr)S^{n_i}(p_i)\ne 0$ where $M$ is the set of those $i$'s for which $n_i$ is maximal among $\{n_1,n_2,\dots,n_k\}$. Let $n$ be the maximal value of $n_i$. It follows from condition (FS) that there are $q\in Q^{\otimes n}$ and $p\in P^{\otimes n}$ such that if we let $x=\sum_{i\in M}\psi_n(p\otimes q_i)x_i\psi_n(p_i\otimes q)$, then 
	\begin{equation*}
		\sigma(x)-\pi(\Delta(x))=S^n(p) \sum_{i\in M} T^{n_i}(q_i)\bigl(\sigma(x_i)-\pi(\Delta(x_i))\bigr)S^{n_i}(p_i)T^n(q)\ne 0. 
	\end{equation*}
	Since $(\sigma(x_0)-\pi(\Delta(x_0)))T^n(q)=0$ and $(\sigma(x_i)-\pi(\Delta(x_i)))S^{n_i}(p_i)T^n(q)=0$ for each $i\notin M$, it follows that 
	\begin{multline*}
		\sigma(x)-\pi(\Delta(x))\\=S^n(p)\biggl( \sigma(x_0)-\pi(\Delta(x_0))+\sum_{i=1}^kT^{n_i}(q_i)\bigl(\sigma(x_i)-\pi(\Delta(x_i))\bigr)S^{n_i}(p_i)\biggr)T^n(q)\in H^{(0)}. 
	\end{multline*}
	Thus $H^{(0)}$ contains a non-zero element of the form $\sigma(x)-\pi(\Delta(x))$, $x\in J'$. If follows from condition (FS) that there is a $p'\in P^{\otimes n}$ such that 
	\begin{equation*}
		S^n(p')\phi_0\bigl(\sigma(x)-\pi(\Delta(x)\bigr)\ne 0, 
	\end{equation*}
	but since $S^n(p'')(\sigma(x)-\pi(\Delta(x)))=0$ for all $p''\in P^{\otimes n}$, it follows that 
	\begin{equation*}
	S^n(p')\phi_0\bigl(\sigma(x)-\pi(\Delta(x)\bigr)=\phi_{-n}\bigl(S^n(p')(\sigma(x)-\pi(\Delta(x))\bigr)=0,
	\end{equation*}
	and we have reached a contradiction. Thus it must be the case that $H^{(0)}\cap\sigma(R)\ne\{0\}$. Let $I=\{x\in R\mid \sigma(x)\in H^{(0)}\}$. Then $I$ is a non-zero $\psi$-invariant ideal of $R$. For each $m\in\No$ let 
	\begin{equation*}
		A_m=\spn\bigl\{T^{n+k}(q)S^k(p)\mid k\in\{0,1,\dots,m\},\ q\in Q^{\otimes n+k},\ p\in P^{\otimes k}\bigr\} \subseteq \FCK[n]
	\end{equation*}
	and 
	\begin{equation*}
		I_m=\{x\in I\mid \phi_0(\sigma(x))\in A_m\}. 
	\end{equation*}
	Then $I_0\subseteq I_1\subseteq I_2\subseteq\dots$ and each $I_m$ is a $\psi$-invariant two-sided ideal in $R$. In fact, $x\in I_{m+1}$, implies that $\psi(px\otimes q)\in I_m$ for all $p\in P$ and $q\in Q$. Since $I$ is nonzero, there exists an $x\ne 0$ and an $m\in\No$ such that $x\in I_m$. Choose $k\in\N$ such that $kn\ge m$. Then 
	\begin{equation*}
		\phi_{(k-1)n}\circ\phi_{(k-2)n}\circ\dots\circ\phi_n\circ\phi_0(\sigma(x))\in\FCK[nk]\setminus\{0\} 
	\end{equation*}
	so it follows from Lemma \ref{lemma:1} that there is a $p\in P^{\otimes nk}$ such that 
	\begin{equation*}
		\phi_{-n}\circ\phi_{-2n}\circ\dots\circ\phi_{-(k-1)n}\circ\phi_{-kn}(S^{nk}(px))= S^{nk}(p) \phi_{(k-1)n}\circ\phi_{(k-2)n}\circ\dots\circ\phi_n\circ\phi_0(\sigma(x))\ne 0, 
	\end{equation*}
	from which it follows that $px\ne 0$. It follows from condition (FS) that there is a $q\in Q^{\otimes kn}$ such that $\psi_{kn}(px\otimes q)\ne 0$. We have that $\psi_{kn}(px\otimes q)\in I_0$, so $I_0\ne \{0\}$.
	
	Since $\phi_0(\sigma(x))\in T^n(Q^{\otimes n})$ for every $x\in I_0$, and $T^n:Q^{\otimes n}\to\FCK[n]$ is injective, we can define $\eta:I_0\to Q^{\otimes n}$ by, for $x\in I_0$, letting $\eta(x)$ be the unique element of $Q^{\otimes n}$ such that $T^n(\eta(x))=\phi_0(\sigma(x))$. It is straightforward to check that $\eta$ is an injective $R$-bimodule homomorphism, and if $p\in P$, $x\in I_0$ and $q\in Q$, then 
	\begin{equation*}
		\begin{split}
			T^n\Bigl(\eta\bigl(\psi(px\otimes q)\bigr)\Bigr)&=\phi_0\Bigl(\sigma\bigl(\psi(px\otimes q)\bigr)\Bigr) =S(p)\phi_0\bigl(\sigma(x)\bigr)T(q)\\
			&=S(p)T^n\bigl(\eta(x)\bigr)T(q)=T^n\bigl(S_pT_{\eta(x)}(q)\bigr), 
		\end{split}
	\end{equation*}
	from which it follows that $\eta(\psi(px\otimes q))=S_pT_{\eta(x)}(q)$.
	
	If $x\in I_0$ then it follows from condition (FS) that there are $q_i\in Q^{(n)}$, $p_i\in P^{(n)}$, $i=1,2,\dots,m$ such that $\sum_{i=0}^m\theta_{q_i,p_i}\eta(x)=\eta(x)$. We then have that 
	\begin{equation*}
		T^n(\eta(x))=T^n\left(\sum_{i=0}^m\theta_{q_i,p_i}\eta(x)\right) =\sum_{i=0}^mT^n(q_i)S^n(p_i)\phi_0(\sigma(x)) =\phi_0\left(\sum_{i=0}^mT^n(q_i)S^n(p_ix)\right) 
	\end{equation*}
	from which it follow that $\sigma(x)=\sum_{i=0}^m T^n(q_i)S^n(p_ix)$. It now follows from Lemma \ref{lemma:3} that $x\in J^{[n]}\subseteq J$. Since $I_0$ is $\psi$-invariant, it follows that $x\in J^{[\infty]}$.
	
	$\eqref{item:3}\implies\eqref{item:1}$: Let $K$ be the ideal in $\OCK$ generated by $\{\sigma(x)-T^n(\eta(x))\mid x\in I_0\}$. Clearly, $K$ is non-zero, so we just have to prove that $K\cap\FCK=\{0\}$. Using condition (FS) and the properties of $\eta$, one can show that if $p\in P$, $x\in I_0$ and $q\in Q$, then 
	\begin{equation*}
		S(p)\bigl(\sigma(x)-T^n(\eta(x))\bigr)\in\spn\bigl\{\bigl(\sigma(x')-T^n(\eta(x'))\bigr)S(p')\bigm| x'\in I_0,\ p'\in P\bigr\} 
	\end{equation*}
	and 
	\begin{equation*}
		\bigl(\sigma(x)-T^n(\eta(x))\bigr)T(q)\in\spn\bigl\{T(q')(\sigma(x')-T^n(\eta(x')))\bigm| q'\in Q,\ x'\in I_0\bigr\}. 
	\end{equation*}
	It follows that 
	\begin{equation*}
		\begin{split}
			K=\spn\Bigl(&\bigl\{T^k(q)\bigl(\sigma(x)-T^n(\eta(x))\bigr)\bigm| k\in\N,\ q\in Q^{\otimes k},\ x\in I_0\bigr\}\\
			&\cup \bigl\{T^k(q)\bigl(\sigma(x)-T^n(\eta(x))\bigr)S^l(p)\bigm| k,l\in\N,\ q\in Q^{\otimes k},\ x\in I_0,\ p\in P^{\otimes l}\bigr\}\\
			&\cup\bigl\{\sigma(x)-T^n(\eta(x))\bigm| x\in I_0\bigr\}\\
			&\cup \bigl\{T^k(q)\bigl(\sigma(x)-T^n(\eta(x))\bigr)\bigm| l\in\N,\ x\in I_0,\ p\in P^{\otimes l}\bigr\}\Bigr), 
		\end{split}
	\end{equation*}
	so to show that $K\cap\FCK=\{0\}$, it sufficies to show the following 3 things: 
	\begin{enumerate}
		[(i)] 
		\item \label{item:4} if $l\in\N$, $A$ is a finite subset of $\{(k,q,x,p)\mid k\in\N,\ q\in Q^{\otimes l+k},\ x\in I_0,\ p\in P^{\otimes k}\}$ and $B$ is a finite subset of $\{(q,x)\mid q\in Q^{\otimes l},\ x\in I_0\}$, then 
		\begin{equation*}
			\sum_{(k,q,x,p)\in A}T^{l+k}(q)\sigma(x)S^k(p) + \sum_{(q,x)\in B}T^l(q)\sigma(x)=0 
		\end{equation*}
		if and only if 
		\begin{equation*}
			\sum_{(k,q,x,p)\in A}T^{l+k}(q)T^n(\eta(x))S^k(p) + \sum_{(q,x)\in B}T^l(q)T^n(\eta(x))=0, 
		\end{equation*}
		\item \label{item:12} if $A$ is a finite subset of $\{(k,q,x,p)\mid k\in\N,\ q\in Q^{\otimes k},\ x\in I_0,\ p\in P^{\otimes k}\}$ and $x_0\in I_0$, then 
		\begin{equation*}
			\sum_{(k,q,x,p)\in A}T^{k}(q)\sigma(x)S^k(p) + \sigma(x_0)=0 
		\end{equation*}
		if and only if 
		\begin{equation*}
			\sum_{(k,q,x,p)\in A}T^{k}(q)T^n(\eta(x))S^k(p) + T^n(\eta(x_0))=0, 
		\end{equation*}
		\item \label{item:13} if $l\in\N$, $A$ is a finite subset of $\{(k,q,x,p)\mid k\in\N,\ q\in Q^{\otimes k},\ x\in I_0,\ p\in P^{\otimes l+k}\}$ and $B$ is a finite subset of $\{(x,p)\mid x\in I_0,\ p\in P^{\otimes l+k}\}$, then 
		\begin{equation*}
			\sum_{(k,q,x,p)\in A}T^{k}(q)\sigma(x)S^{l+k}(p) + \sum_{(q,x)\in B}\sigma(x)S^{l+k}(p)=0 
		\end{equation*}
		if and only if 
		\begin{equation*}
			\sum_{(k,q,x,p)\in A}T^{k}(q)T^n(\eta(x))S^{l+k}(p) + \sum_{(x,p)\in B}T^n(\eta(x))S^l(p)=0. 
		\end{equation*}
	\end{enumerate}
	We will just prove \eqref{item:4}. The other two claims can be proved in a similar way.
	
	To prove \eqref{item:4}, notice first that if $x\in I_0$ and $k\in\N$, then, since $I_0\subseteq J^{[\infty]}\subseteq J^{[k]}$, it follows from Lemma \ref{lemma:3} that there are $q_1,\dots,q_m\in Q^{\otimes k}$ and $p_1,\dots,p_m\in P^{\otimes k}$ such that $\sigma(x)=\sum_{i=1}^mT^k(q_i)S^k(p_i)$. It follows from condition (FS) that there are $q'_1,\dots,q'_r,q''_1,\dots,q''_s\in Q^{\otimes k}$ and $p'_1,\dots,p'_r,p''_1,\dots,p''_s\in P^{\otimes k}$ such that 
	\begin{equation*}
		\begin{split}
			\sigma(x)&=\sum_{i=1}^mT^k(q_i)S^k(p_i) =\sum_{j=1}^r\sum_{l=1}^s \sum_{i=1}^m T^k(q'_j)S^k(p'_j)T^k(q_i)S^k(p_i)T^k(q''_l)S^k(p''_l)\\
			&=\sum_{j=1}^r\sum_{l=1}^sT^k(q'_j)S^k(p'_j)\sigma(x)T^k(q''_l)S^k(p''_l) =\sum_{j=1}^r\sum_{l=1}^sT^k(q'_j)\sigma(\psi_k(p'_jx\otimes q''_l))S^k(p''_l). 
		\end{split}
	\end{equation*}
	Since $I_0$ is $\psi$-invariant, it follows that each $\psi_k(p'_jx\otimes q''_l)\in I_0$ and thus that 
	\begin{equation*}
		T^n(\eta(x))=\sum_{j=1}^r\sum_{l=1}^sT^k(q'_j)T^n(\eta(\psi_k(p'_jx\otimes q''_l)))S^k(p''_l). 
	\end{equation*}
	Thus it sufficies to show that if $k,l\in\N$ and $C$ is a finite subset of $\{(q,x,p)\mid q\in Q^{l+k},\ x\in I_0,\ p\in P^{\otimes k}\}$, then it is the case that $\sum_{(q,x,p)\in C}T^{l+k}(q)\sigma(x)S^k(p)=0$ if and only if $\sum_{(q,x,p)\in C}T^{l+k}(q)T^n(\eta(x))S^k(p)=0$, and that can be done using condition (FS) and the properties of $\eta$. 
\end{proof}

\section{Condition (L)} \label{sec:condition-L}

In this section condition (L) is introduced (Definition \ref{def:L}) and sufficient and necessary conditions for when every non-zero ideal in $\OCK$ contains a non-zero graded ideal (Theorem \ref{thm:L}) are given.
\begin{definition}
	\label{def:L} We say that a $\psi$-invariant ideal $I$ in $R$ is an \emph{$\psi$-invariant cycle} if there exist $n\in\N$ and an injective $R$-bimodule homomorphism $\eta:I\to Q^{\otimes n}$ such that $S_pT_{\eta(x)}(q)=\eta(\psi(px\otimes q))$ for $p\in P$, $x\in I$ and $q\in Q$, and we say that $J$ satisfies \emph{condition (L)} with respect to the $R$-system $(P,Q,\psi)$ if there are no non-zero $\psi$-invariant cycles $I$ in $R$ such that $I\subseteq J^{[\infty]}$.
	
\end{definition}

We will often, when it is clear from the context which $R$-system $(P,Q,\psi)$ we are working with, simply call a $\psi$-invariant cycle for an invariant cycle, and say that $J$ satisfies condition (L) instead of saying that it satisfies condition (L) with respect to $(P,Q,\psi)$.

Recall that if $(S',T',\sigma''B)$ is a covariant representation of $(P,Q,\psi)$, then $J_{(S',T',\sigma',B)}$ is defined to be the ideal $\{x\in R\mid \sigma'(x)\in\pi_{T',S'}(\mathcal{F}_P(Q)\}$ (see \cite[Definition 3.23]{CaOr2011}).
\begin{theorem}
	\label{thm:L} The following 4  conditions are equivalent: 
	\begin{enumerate}
		\item \label{item:14} The ideal $J$ satisfies condition (L). 
		\item \label{item:5} The subring $\FCK$ has the ideal intersection property. 
		\item \label{item:6} Every non-zero ideal in $\OCK$ contains a non-zero graded ideal. 
		\item \label{item:15} If $(S',T',\sigma''B)$ is an injective covariant representation of $(P,Q,\psi)$ and $J=J_{(S',T',\sigma',B)}$, then the ring homomorphism $\eta^J_{(S',T',\sigma',B)}:\OCK\to B$ from \cite[Theorem 3.29 (ii)]{CaOr2011} is injective. 
	\end{enumerate}
\end{theorem}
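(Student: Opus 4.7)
The plan is to establish the four-way equivalence via the chain $(1)\Leftrightarrow(2)\Leftrightarrow(3)$ and $(2)\Leftrightarrow(4)$, using Proposition~\ref{prop:main} for the first equivalence and the $\Z$-grading of $\OCK$ together with the classification of graded ideals from \cite[Theorem 7.27]{CaOr2011} for the others. The equivalence $(1)\Leftrightarrow(2)$ is essentially a reformulation: Definition~\ref{def:L} is precisely the negation of condition~(\ref{item:3}) in Proposition~\ref{prop:main}, which is equivalent to the negation of~(\ref{item:1}) there, namely~(2).

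For $(2)\Leftrightarrow(3)$ I use that the two-sided ideal of $\OCK$ generated by any subset of $\FCK$ is automatically $\Z$-graded, since decomposing elements of $\OCK$ into graded components shows that multiplying a degree-$0$ element on either side by graded pieces preserves the grading. Thus if (2) holds and $K$ is a non-zero ideal, the ideal generated by $K\cap\FCK$ is a non-zero graded subideal of $K$. Conversely, from a non-zero graded ideal $H=\bigoplus_k H^{(k)}\subseteq K$ and a non-zero $x_k\in H^{(k)}$, Lemma~\ref{lemma:1} yields $S^k(p)x_k\ne 0$ (when $k>0$) or $x_k T^{-k}(q)\ne 0$ (when $k<0$) inside $H^{(0)}\subseteq K\cap\FCK$, while the case $k=0$ is immediate.

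For $(4)\Rightarrow(2)$ I argue by contraposition. Given a non-zero ideal $K$ with $K\cap\FCK=\{0\}$, let $\rho:\OCK\to\OCK/K$ be the quotient; then $(\rho S,\rho T,\rho\sigma,\OCK/K)$ is a covariant representation of $(P,Q,\psi)$, and $\rho\sigma$ is injective because $\sigma(R)\subseteq\FCK$. The equality $J=J_{(\rho S,\rho T,\rho\sigma,\OCK/K)}$ follows because $\sigma(x)-\pi(\Delta(x))\in\FCK$ for every $x\in R$, so whenever $(\rho\sigma)(x)\in\pi_{\rho T,\rho S}(\mathcal{F}_P(Q))$ we obtain $\sigma(x)-\pi(\Delta(x))\in K\cap\FCK=\{0\}$, forcing $x\in J$ via \cite[Lemma 3.21]{CaOr2011}. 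Then (4) would force $\eta^J=\rho$ to be injective, contradicting $K\ne\{0\}$.

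The main obstacle is $(2)\Rightarrow(4)$. Let $(S',T',\sigma',B)$ be as in~(4). Since (2) is equivalent to~(3), if $\ker\eta^J\ne\{0\}$ it contains a non-zero graded ideal $H$, and by the argument just given we may assume $H^{(0)}\ne\{0\}$. Mimicking the dichotomy appearing in the proof of $(\ref{item:1})\Rightarrow(\ref{item:2})$ of Proposition~\ref{prop:main} together with the description in \cite[Theorem 7.27]{CaOr2011}, $H^{(0)}$ must either meet $\sigma(R)$ non-trivially or contain a non-zero element of the form $\sigma(x)-\pi(\Delta(x))$ with $x\in J'$ for some faithful $\psi$-compatible ideal $J'\supseteq J$. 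In the first case, any non-zero $y\in R$ with $\sigma(y)\in H^{(0)}$ satisfies $\sigma'(y)=\eta^J(\sigma(y))=0$, contradicting injectivity of $\sigma'$. In the second case, $\sigma'(x)=\pi_{T',S'}(\Delta(x))\in\pi_{T',S'}(\mathcal{F}_P(Q))$, so $x\in J_{(S',T',\sigma',B)}=J$, which then forces $\sigma(x)=\pi(\Delta(x))$ already in $\OCK$, contradicting the choice of witness. Either way $\ker\eta^J=\{0\}$. The delicate point is the correct extraction of a witness from an arbitrary graded ideal inside $\ker\eta^J$, together with confirming that the two hypotheses packaged in~(4) are exactly what is needed to rule out the two cases of the dichotomy.
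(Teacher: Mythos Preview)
Your argument is correct; the implications $(1)\Leftrightarrow(2)$, $(2)\Leftrightarrow(3)$ and $(4)\Rightarrow(2)$ match the paper's proof essentially line for line (the paper invokes \cite[Lemma~3.35]{CaOr2011} where you argue directly with Lemma~\ref{lemma:1}, and \cite[Proposition~3.28]{CaOr2011} where you argue with the relation $\sigma(x)-\pi(\theta)\in\FCK$, but these are the same arguments). Two small cosmetic points: in $(4)\Rightarrow(2)$ your phrase ``$\sigma(x)-\pi(\Delta(x))\in\FCK$ for every $x\in R$'' is slightly off since $\Delta(x)$ is only defined for $x\in\Delta^{-1}(\mathcal{F}_P(Q))$; what you need (and use) is that $\sigma(x)-\pi(\theta)\in\FCK$ for any $\theta\in\mathcal{F}_P(Q)$. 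Also, the dichotomy you invoke sits in the $(\ref{item:2})\Rightarrow(\ref{item:3})$ part of Proposition~\ref{prop:main}, not $(\ref{item:1})\Rightarrow(\ref{item:2})$.

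The genuine difference is in $(2)\Rightarrow(4)$. The paper does not analyse $\ker\eta^J$ directly. Instead it lets $H$ be the (graded) ideal generated by $\ker\eta^J\cap\FCK$, observes that $\eta^J$ factors through $\OCK/H$ as $\phi\circ\wp$, and shows that the quotient representation $(\wp\circ S,\wp\circ T,\wp\circ\sigma)$ is injective with $J_{(\wp\circ S,\wp\circ T,\wp\circ\sigma)}=J$; the gauge-invariant uniqueness theorem \cite[Theorem~3.29]{CaOr2011} then forces $\wp$ to be an isomorphism, whence $\ker\eta^J\cap\FCK=\{0\}$ and~(2) finishes the job. Your route, by contrast, pulls a non-zero graded ideal out of $\ker\eta^J$ via~(3), applies the classification \cite[Theorem~7.27]{CaOr2011}, and recycles the witness-extraction argument from the proof of Proposition~\ref{prop:main} to produce either a non-zero $\sigma(y)$ or a non-zero $\sigma(x)-\pi(\Delta(x))$ in the kernel, each of which is killed by one of the two hypotheses in~(4). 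Your approach makes transparent \emph{why} precisely those two hypotheses are needed (injectivity of $\sigma'$ rules out the first obstruction, $J=J_{(S',T',\sigma',B)}$ the second), at the cost of reusing a lemma buried inside the proof of Proposition~\ref{prop:main} rather than the clean black-box statement of \cite[Theorem~3.29]{CaOr2011}.
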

\begin{proof}
	$\eqref{item:14}\Leftrightarrow\eqref{item:5}$ follows from Proposition \ref{prop:main}.
	
	$\eqref{item:5}\Rightarrow\eqref{item:6}$: Let $K$ be a non-zero ideal in $\OCK$. Then $K\cap\FCK\ne \{0\}$ by assumption, and it follows from \cite[Lemma 3.35]{CaOr2011} that the ideal $H$ generated by $K\cap\FCK$ is graded. Since $H$ is obviously contained in $K$, this proves \eqref{item:6}.
	
	$\eqref{item:6}\Rightarrow\eqref{item:5}$: Let $K$ be a non-zero ideal in $\OCK$. By assumption there is a non-zero graded ideal $H$ such that $H\subseteq K$. It follows from \cite[Lemma 3.35]{CaOr2011} that $H\cap\FCK\ne\{0\}$, so also $K\cap\FCK\ne\{0\}$, which proves that $\FCK$ has the ideal intersection property.
	
	$\eqref{item:5}\Rightarrow\eqref{item:15}$: Let $H$ be the ideal in $\OCK$ generated by $\ker\eta^J_{(S',T',\sigma',B)}\cap\FCK$, and let $\wp:\OCK\to\OCK/H$ be the quotient map. Then $(\wp\circ S,\wp\circ T,\wp\circ\sigma,\OCK/H)$ is a surjective covariant representation of $(P,Q,\psi)$. It follows from \cite[Lemma 3.35]{CaOr2011} that $H$ is graded, from which it follows that the representation $(\wp\circ S,\wp\circ T,\wp\circ\sigma,\OCK/H)$ is graded (see \cite[Definition 3.20]{CaOr2011}). Since $H\subseteq \ker\eta^J_{(S',T',\sigma',B)}$, it follows that there is a ring homomorphism $\phi:\OCK/H\to B$ such that $\phi\circ\wp=\eta^J_{(S',T',\sigma',B)}$ and $\phi\circ\wp\circ S=S'$, $\phi\circ\wp\circ T=T'$ and $\phi\circ\wp\circ\sigma=\sigma'$. Since $(S',T',\sigma',B)$ is an injective representation, it follows that also $(\wp\circ S,\wp\circ T,\wp\circ\sigma,\OCK/H)$ is injective. It follows from \cite[Remark 3.13]{CaOr2011} that 
	\begin{equation*}
		J\subseteq J_{(\wp\circ S,\wp\circ T,\wp\circ\sigma,\OCK/H)} \subseteq J_{(S',T',\sigma',B)}=J. 
	\end{equation*}
	Thus $J_{(\wp\circ S,\wp\circ T,\wp\circ\sigma,\OCK/H)}=J$, and it follows from \cite[Theorem 3.29]{CaOr2011} that $\wp$ is an isomorphism, and thus that $\ker\eta^J_{(S',T',\sigma',B)}\cap\FCK=\{0\}$. It follows by assumption that $\ker\eta^J_{(S',T',\sigma',B)}=\{0\}$, and thus that $\eta^J_{(S',T',\sigma',B)}$ is injective.
	
	$\eqref{item:15}\Rightarrow\eqref{item:5}$: Let $K$ be an ideal in $\OCK$ such that $K\cap\FCK=\{0\}$, and let $\wp:\OCK\to\OCK/K$ be the quotient map. Then $(\wp\circ S,\wp\circ T,\wp\circ\sigma,\OCK/K)$ is a surjective covariant representation of $(P,Q,\psi)$. Since $\sigma(R)$ and $\pi_{T,S}(\mathcal{F}_P(Q))$ are subsets of $\FCK$ and $K\cap\FCK=\{0\}$, it follows from \cite[Proposition 3.28]{CaOr2011} that 
	\begin{equation*}
		J_{(\wp\circ S,\wp\circ T,\wp\circ\sigma,\OCK/H)} = J_{(S,T,\sigma,\OCK)}=J. 
	\end{equation*}
	Thus $\wp=\eta_{(\wp\circ S,\wp\circ T\wp\circ\sigma,\OCK/K)}$ is injective by assumption, and $K=\{0\}$ which proves that $\FCK$ has the ideal intersection property. 
\end{proof}

\section{The Cuntz-Krieger uniqueness theorem} \label{sec:cuntz-krieg-uniq}

In this Section the \emph{Cuntz-Krieger uniqueness property} is defined (Definition \ref{def:CK}), and the \emph{Cuntz-Krieger uniqueness result} is proved (Theorem \ref{thm:CK}).
\begin{definition}
	\label{def:CK} We say that the ideal $J$ has the \emph{Cuntz-Krieger uniqueness property} with respect to the $R$-system $(P,Q,\psi)$ if the following holds: 
	
	If $(S_1,T_1,\sigma_1,B_1)$ and $(S_2,T_2,\sigma_2,B_2)$ are two injective covariant representations of $(P,Q,\psi)$ and they are both Cuntz-Pimsner invariant relative to $J$, then there is a ring isomorphism $\phi$ between $\mathcal{R}\langle S_1,T_1,\sigma_1\rangle$ and $\mathcal{R}\langle S_2,T_2,\sigma_2\rangle$ such that $\phi\circ\sigma_1=\sigma_2$, $\phi\circ S_1=S_2$ and $\phi\circ T_1=T_2$. 
\end{definition}

We will often, when it is clear from the context which $R$-system $(P,Q,\psi)$ we are working with, simply say that $J$ has the Cuntz-Krieger uniqueness property instead of saying that it has the Cuntz-Krieger uniqueness property with respect to $(P,Q,\psi)$.

Recall from \cite[Definition 4.6]{CaOr2011} that $J$ is said to be a \emph{maximal} $\psi$-compatible ideal if $J=J'$ for any faithful $\psi$-compatible ideal $J'$ in $R$ satisfying $J\subseteq J'$.

\begin{theorem}
	\label{thm:CK} The following 5 conditions are equivalent: 
	\begin{enumerate}
		\item \label{item:9} The ideal $J$ has the Cuntz-Krieger uniqueness property. 
		\item \label{item:10} If $(S',T',\sigma',B)$ is an injective covariant representation of $(P,Q,\psi)$ which is Cuntz-Pimsner invariant relative to $J$, then the ring homomorphism $\eta_{(S',T',\sigma',B)}^J:\OCK \to B$ from \cite[Theorem 3.18]{CaOr2011} is injective. 
		\item \label{item:7} The subring $\sigma(R)$ has the ideal intersection property. 
		\item \label{item:8} The subring $\FCK$ has the ideal intersection property, and $J$ is a maximal $\psi$-compatible ideal. 
		\item \label{item:11} The ideal $J$ satisfies condition (L) and is a maximal $\psi$-compatible ideal. 
	\end{enumerate}
\end{theorem}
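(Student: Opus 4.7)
The plan is to establish the equivalence by closing a short cycle: first prove $(1)\Leftrightarrow(2)$, then $(2)\Rightarrow(4)$ and $(5)\Rightarrow(2)$, observing that $(4)\Leftrightarrow(5)$ is immediate from Theorem~\ref{thm:L}. The ideal-intersection condition $(3)$ is then tied to $(4)$ separately.

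For $(1)\Leftrightarrow(2)$ I would argue directly from the definitions. Given $(2)$, apply it to each of the two injective Cuntz--Pimsner invariant representations to produce injective homomorphisms $\eta^J_{(S_i,T_i,\sigma_i,B_i)}\colon\OCK\to B_i$ whose images are precisely $\mathcal{R}\langle S_i,T_i,\sigma_i\rangle$; composing one with the inverse of the other yields the isomorphism required by $(1)$. Conversely, given $(1)$, use the canonical representation $(S,T,\sigma,\OCK)$ as one of the two: the isomorphism delivered by $(1)$ must agree with $\eta^J_{(S',T',\sigma',B)}$ by the universal property, and is thus injective.

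For $(2)\Rightarrow(4)$, note first that $(2)$ is strictly stronger than condition $(15)$ of Theorem~\ref{thm:L}, since an injective representation satisfying $J=J_{(S',T',\sigma',B)}$ is automatically Cuntz--Pimsner invariant relative to $J$; by Theorem~\ref{thm:L} this already yields that $\FCK$ has the ideal intersection property. For maximality of $J$, suppose $J\subsetneq J'$ is faithful and $\psi$-compatible. The canonical representation in $\mathcal{O}_{(P,Q,\psi)}(J')$ is injective and Cuntz--Pimsner invariant relative to $J$, so $(2)$ forces the induced map $\OCK\to\mathcal{O}_{(P,Q,\psi)}(J')$ to be injective. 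However, by \cite[Lemma 3.21]{CaOr2011} the element $\sigma(x)-\pi(\Delta(x))$ for any $x\in J'\setminus J$ lies in the kernel of this map but is nonzero in $\OCK$, a contradiction. Conversely, for $(5)\Rightarrow(2)$, let $(S',T',\sigma',B)$ be an injective Cuntz--Pimsner invariant representation; then $J_{(S',T',\sigma',B)}$ is faithful, $\psi$-compatible, and contains $J$, so maximality gives equality, and Theorem~\ref{thm:L}$(15)$ applied to condition (L) produces the injectivity of $\eta^J$.

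For $(3)\Leftrightarrow(4)$: the inclusion $\sigma(R)\subseteq\FCK$ yields $(3)\Rightarrow\FCK$-ideal intersection at once, and maximality of $J$ follows by repeating the kernel argument of $(2)\Rightarrow(4)$, noting that the kernel of the canonical map $\OCK\to\mathcal{O}_{(P,Q,\psi)}(J')$ meets $\sigma(R)$ trivially because $\sigma_{J'}$ is injective. For $(4)\Rightarrow(3)$, let $K\subseteq\OCK$ satisfy $K\cap\sigma(R)=\{0\}$; the quotient $\wp$ is then injective on $\sigma(R)$, and I would promote this to injectivity of $\wp\circ S$ and $\wp\circ T$ using condition (FS): if $S(p)\in K$ then $\sigma(\psi(p\otimes q))=S(p)T(q)\in K\cap\sigma(R)=\{0\}$ for every $q\in Q$, so $\psi(p\otimes q)=0$ for all $q$, and (FS) forces $p=0$; the argument for $T$ is symmetric. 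Thus $(\wp\circ S,\wp\circ T,\wp\circ\sigma,\OCK/K)$ is an injective covariant representation, and $J_{(\wp\circ S,\wp\circ T,\wp\circ\sigma)}$ is a faithful $\psi$-compatible ideal containing $J$, hence equal to $J$ by maximality. Theorem~\ref{thm:L}$(15)$ then forces $\wp$ to be injective, so $K=\{0\}$.

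The main obstacle I expect is establishing the maximality of $J$ in $(2)\Rightarrow(4)$ and $(3)\Rightarrow(4)$: one must exhibit a specific nonzero element of the kernel of $\OCK\to\mathcal{O}_{(P,Q,\psi)}(J')$ that lies outside $\sigma(R)$, which depends on the careful use of \cite[Lemma 3.21]{CaOr2011}. A secondary subtlety is invoking condition (FS) in $(4)\Rightarrow(3)$ to propagate injectivity from $\sigma(R)$ to the full covariant representation on $\OCK/K$.
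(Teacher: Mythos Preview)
Your proof is correct and follows essentially the same route as the paper's: both hinge on Theorem~\ref{thm:L}, on the maximality argument via the canonical map $\OCK\to\mathcal{O}_{(P,Q,\psi)}(J')$ (the paper phrases this as ``$\rho_J(\mathcal{T}(J'))$ is a non-zero ideal meeting $\sigma(R)$ trivially'' and cites \cite[Remark~4.1]{CaOr2011}, which is exactly your kernel argument), and on quotient-representation arguments for the ideal intersection property. The paper organises the cycle as $(2)\Rightarrow(3)\Rightarrow(4)\Rightarrow(2)$ rather than your $(2)\Rightarrow(4)$ plus a separate $(3)\Leftrightarrow(4)$, but the substance is identical.

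One simplification worth noting: the ``secondary subtlety'' you flag in $(4)\Rightarrow(3)$---using (FS) to propagate injectivity from $\wp\circ\sigma$ to $\wp\circ S$ and $\wp\circ T$---is unnecessary. In \cite{CaOr2011} an \emph{injective} covariant representation is by definition one for which $\sigma'$ is injective; injectivity of $S'$ and $T'$ need not be checked separately (and the paper's proof of $(2)\Rightarrow(3)$ simply asserts the quotient representation is injective once $K\cap\sigma(R)=\{0\}$). Your (FS) argument is correct, just superfluous. Also, the fact that $\sigma(x)-\pi(\Delta(x))\ne 0$ in $\OCK$ for $x\in J'\setminus J$ is really \cite[Proposition~3.28]{CaOr2011} (that $J_{(S,T,\sigma,\OCK)}=J$) rather than Lemma~3.21.
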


\begin{proof}
	$\eqref{item:9}\Rightarrow\eqref{item:10}$: The ring homomorphism $\eta_{(S',T',\sigma',B)}^J:\OCK \to B$ is the unique ring homomorphism from $\OCK$ to $B$ such that $\eta_{(S',T',\sigma',B)}^J\circ\sigma=\sigma'$, $\eta_{(S',T',\sigma',B)}^J\circ S=S'$ and $\eta_{(S',T',\sigma',B)}^J\circ T=T'$, so it follows by assumption that $\eta_{(S',T',\sigma',B)}^J$ is injective.
	
	$\eqref{item:10}\Rightarrow\eqref{item:9}$: If $(S_1,T_1,\sigma_1,B_1)$ and $(S_2,T_2,\sigma_2,B_2)$ are two injective covariant representations of $(P,Q,\psi)$ and there are both Cuntz-Pimsner invariant relative to $J$, then $\phi=\eta_{(S_2,T_2,\sigma_2,B_2)}^J\circ (\eta_{(S_1,T_1,\sigma_1,B_1)}^J)^{-1}$ is a ring isomorphism between $\mathcal{R}\langle S_1,T_1,\sigma_1\rangle$ and $\mathcal{R}\langle S_2,T_2,\sigma_2\rangle$ such that $\phi\circ\sigma_1=\sigma_2$, $\phi\circ S_1=S_2$ and $\phi\circ T_1=T_2$.
	
	$\eqref{item:10}\Rightarrow\eqref{item:7}$: Let $K$ be an ideal in $\OCK$ such that $K\cap\sigma(R)=\{0\}$, and let $\wp:\OCK\to\OCK/K$ be the quotient map. Then $(\wp\circ S,\wp\circ T,\wp\circ\sigma,\OCK/K)$ is an injective and surjective covariant representation of $(P,Q,\psi)$ which is Cuntz-Pimsner invariant relative to $J$. It follows by assumption that $\wp=\eta_{(\wp\circ S,\wp\circ T\wp\circ\sigma,\OCK/K)}$ is injective. Thus $K=\{0\}$, which proves that $\sigma(R)$ has the ideal intersection property.
	
	$\eqref{item:7}\Rightarrow\eqref{item:8}$: Since $\sigma(R)\subseteq\FCK$, it follows that $\FCK$ has the ideal intersection property if $\sigma(R)$ has. If $J$ is not a maximal $\psi$-invariant ideal, then there exists a $\psi$-compatible ideal $J'$ such that $J\subsetneq J'$. It follows from \cite[Remark 4.1]{CaOr2011} that $\rho_J(\mathcal{T}(J'))$ then would be a non-zero ideal in $\OCK$ with a zero intersection with $\sigma(R)$, which would mean that $\sigma(R)$ does not have the ideal intersection property. Thus it must be the case that $J$ is a maximal $\psi$-invariant ideal.
	
	$\eqref{item:8}\Rightarrow\eqref{item:10}$: Since $J$ is a maximal $\psi$-compatible ideal by assumption, it follows that $J_{(S',T',\sigma',B)}=J$. Thus it follows from Theorem \ref{thm:L} that $\eta^J_{(S',T',\sigma',B)}$ is injective.
	
	$\eqref{item:8}\Leftrightarrow\eqref{item:11}$ follows from Theorem \ref{thm:L}. 
\end{proof}

\section{Simplicity of $\OCK$} \label{sec:simplicity-ock}

In this section sufficient and necessary conditions for when $\OCK$ is simple are given (Theorem \ref{thm:simple}).
\begin{definition}
	We say that $J$ is a \emph{super maximal} $\psi$-compatible ideal if the only $T$-pairs $(I,J')$ of $(P.Q,\psi)$ which satisfies that $J\subseteq J'$, are $(0,J)$ and $(R,R)$. 
\end{definition}

Since $(0,J')$ is a $T$-pair of $(P.Q,\psi)$ for any any faithful $\psi$-compatible ideal $J'$ in $R$, it follows that if $J$ is a super maximal $\psi$-compatible ideal, then it is also a maximal $\psi$-compatible ideal.

\begin{remark}
	\label{remark:supermax} It follows from \cite[Theorem 7.27]{CaOr2011} that $J$ is a  super maximal $\psi$-compatible ideal if and only if the only graded ideals in $\OCK$ are $\{0\}$ and $\OCK$.
\end{remark}

\begin{theorem}
	\label{thm:simple} The following 5 conditions are equivalent: 
	\begin{enumerate}
		\item \label{item:16} The ring $\OCK$ is simple. 
		\item \label{item:17} The subring $\sigma(R)$ has the ideal intersection property and $J$ is a super maximal $\psi$-compatible ideal. 
		\item \label{item:18} The subring $\FCK$ has the ideal intersection property and $J$ is a super maximal $\psi$-compatible ideal. 
		\item \label{item:19} The ideal $J$ satisfies condition (L) and is a super maximal $\psi$-compatible ideal.
		\item \label{item:20} If $(S',T',\sigma',B)$ is a non-zero covariant representation of $(P,Q,\psi)$ which is Cuntz-Pimsner invariant relative to $J$, then the ring homomorphism 
		$$\eta_{(S',T',\sigma',B)}^J:\OCK \to B$$ 
		from \cite[Theorem 3.18]{CaOr2011} is injective. 
	\end{enumerate}
\end{theorem}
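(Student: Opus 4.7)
The strategy is to assemble Theorem \ref{thm:simple} from Theorems \ref{thm:L} and \ref{thm:CK} together with Remark \ref{remark:supermax}, rather than developing fresh machinery. Two equivalences are essentially free: since super maximality of $J$ implies maximality, the equivalence of conditions (3) and (4) in Theorem \ref{thm:CK} specialises to give $(2)\Leftrightarrow(3)$, and $(3)\Leftrightarrow(4)$ is the $(1)\Leftrightarrow(2)$ clause of Theorem \ref{thm:L}.

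For the core equivalence $(1)\Leftrightarrow(3)$, the plan for $(1)\Rightarrow(3)$ is straightforward: simplicity forces the only ideals of $\OCK$ to be $\{0\}$ and $\OCK$, hence in particular the only graded ideals are $\{0\}$ and $\OCK$, and Remark \ref{remark:supermax} then yields super maximality of $J$; the ideal intersection property for $\FCK$ is automatic since any non-zero ideal must equal $\OCK$, which intersects the (non-zero) subring $\FCK$ in $\FCK$ itself. For $(3)\Rightarrow(1)$, I would take an arbitrary non-zero ideal $K$, apply the ideal intersection property for $\FCK$ and the implication $(2)\Rightarrow(3)$ of Theorem \ref{thm:L} to produce a non-zero graded ideal $H\subseteq K$, and then invoke super maximality together with Remark \ref{remark:supermax} to force $H=\OCK$, whence $K=\OCK$.

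The equivalence $(1)\Leftrightarrow(5)$ is a short kernel argument. For $(1)\Rightarrow(5)$: if $\OCK$ is simple and $(S',T',\sigma',B)$ is a non-zero covariant representation Cuntz-Pimsner invariant relative to $J$, then $\ker\eta^J_{(S',T',\sigma',B)}$ is an ideal of $\OCK$ that cannot be all of $\OCK$ (as the representation is non-zero), so it must be $\{0\}$. Conversely, any proper ideal $K\subsetneq\OCK$ yields via the quotient map a non-zero covariant representation on $\OCK/K$ that is Cuntz-Pimsner invariant relative to $J$, and (5) forces the induced homomorphism (which is the quotient map itself) to be injective, so $K=\{0\}$. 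The sole obstacle, and it is mild, is checking that the relevant subrings $\sigma(R)$ and $\FCK$ are non-zero, so that the intersection property statements are not rendered vacuous, and that the quotient representation in $(5)\Rightarrow(1)$ is indeed non-zero whenever $K\ne\OCK$; these are small verifications that follow from the standing assumptions on $R$ and $(P,Q,\psi)$.
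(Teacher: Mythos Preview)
Your proof is correct and closely mirrors the paper's in spirit, but the chain of implications is organised a bit differently, and your route to condition \eqref{item:20} is more economical. The paper proves $\eqref{item:16}\Rightarrow\eqref{item:17}$, then $\eqref{item:17}\Leftrightarrow\eqref{item:18}\Leftrightarrow\eqref{item:19}$ via Theorem~\ref{thm:CK}, then $\eqref{item:17}\Rightarrow\eqref{item:20}$, and finally $\eqref{item:20}\Rightarrow\eqref{item:16}$. Its step $\eqref{item:17}\Rightarrow\eqref{item:20}$ invokes the $T$-pair machinery from \cite{CaOr2011} (Proposition~7.8, Remark~3.25, Theorem~7.11) to show that a non-zero representation which is Cuntz-Pimsner invariant relative to $J$ must have $(I_{(S',T',\sigma',B)},J_{(S',T',\sigma',B)})=(0,J)$, hence be injective, after which Theorem~\ref{thm:CK} applies. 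You bypass this entirely: once you have established $\eqref{item:18}\Rightarrow\eqref{item:16}$ directly (via Theorem~\ref{thm:L} and Remark~\ref{remark:supermax}), the implication $\eqref{item:16}\Rightarrow\eqref{item:20}$ becomes the trivial observation that the kernel of $\eta^J$ is a proper ideal in a simple ring. Both approaches are short; yours stays slightly more internal to the paper, while the paper's extracts a sharper intermediate statement (that any non-zero representation which is Cuntz-Pimsner invariant relative to a super maximal $J$ is automatically injective) that is of independent interest.
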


\begin{proof}
	$\eqref{item:16}\Rightarrow\eqref{item:17}$: If $\OCK$ is simple, then clearly $\sigma(R)$ has the ideal intersection property. If $(I,J')$ is a $T$-pair of $(P,Q,\psi)$ different from $(0,J)$, then it follows from \cite[Theorem 7.27]{CaOr2011} that $H^J_{(I,J')}$ is a non-zero ideal in $\OCK$. If $\OCK$ is simple, then that would imply that $H^J_{(I,J')}=\OCK$ and thus $(I,J')=(R,R)$ from which it follows that $J$ is a super maximal $\psi$-compatible ideal.
	
	$\eqref{item:17}\Leftrightarrow\eqref{item:18}$ and $\eqref{item:18}\Leftrightarrow\eqref{item:19}$ follow from Theorem \ref{thm:CK} and the fact that $J$ is a maximal $\psi$-compatible ideal if it is a super maximal $\psi$-compatible ideal.
	
	$\eqref{item:17}\Rightarrow\eqref{item:20}$: It follows from \cite[Proposition 7.8]{CaOr2011} that $(I_{(S',T',\sigma',B)},J_{(S',T',\sigma',B)})$ is a $T$-pair. Since $(S',T',\sigma',B)$ is Cuntz-Pimsner invariant relative to $J$, it follows from \cite[Remark 3.25]{CaOr2011} that $J\subseteq J_{(S',T',\sigma',B)}$, and since $(S',T',\sigma',B)$ is non-zero, it follows from \cite[Theorem 7.11]{CaOr2011} that $(I_{(S',T',\sigma',B)},J_{(S',T',\sigma',B)})\ne (R,R)$. Thus $(I_{(S',T',\sigma',B)},J_{(S',T',\sigma',B)})=(0,J)$ which implies that $(S',T',\sigma',B)$ is an injective representation. It then follows from Theorem \ref{thm:CK} that $\eta_{(S',T',\sigma',B)}^J$ is injective.
	
	$\eqref{item:20}\Rightarrow\eqref{item:16}$: Let $K$ be a proper ideal in $\OCK$, and let $\wp:\OCK\to\OCK/K$ be the quotient map. Then $(\wp\circ S,\wp\circ T,\wp\circ\sigma,\OCK/K)$ is a surjective covariant representation of $(P,Q,\psi)$ which is Cuntz-Pimsner invariant relative to $J$. It follows by assumption that $\wp=\eta_{(\wp\circ S,\wp\circ T\wp\circ\sigma,\OCK/K)}$ is injective. Thus $K=\{0\}$ which proves that $\OCK$ is simple. 
\end{proof}

\section{Condition (K)} \label{sec:condition-k}

In this section condition (K) is introduced (Definition \ref{def:K}), and sufficient and necessary conditions for when every ideal in $\OCK$ is graded are given (Theorem \ref{thm:K}).

Recall from \cite[Section 7]{CaOr2011} that if $I$ is a $\psi$-invariant ideal in $R$, then $R_I=R/I$, $Q_I=Q/QI$ and ${}_IP=P/IP$, and $\wp_I$ denote the corresponding quotient map. Recall also that there is an $R_I$-bimodule homomorphism $\psi_I:{}_IP\otimes Q_I\to R_I$ given by $\psi_I(\wp_I(p)\otimes\wp_I(q))=\wp_I(\psi(p\otimes q))$. The triple $({}_IP,Q_I,\psi_I)$ is then an $R_I$-system satisfying condition (FS) (see \cite[Lemma 7.4]{CaOr2011}). When $(I,J')$ is a $T$-pair, then $J'_I$ denote the faithful $\psi_I$-compatible ideal $\wp_I(J')$ in $R_I$.
\begin{definition}
	\label{def:K} We say that the ideal $J$ satisfies \emph{condition (K)} with respect to the $R$-system $(P,Q,\psi)$ if $J'_I$ satisfies condition (L) with respect to the $R_I$-system $({}_IP,Q_I,\psi_I)$ whenever $(I,J')$ is a $T$-pair of $(P,Q,\psi)$ such that $J\subseteq J'$. 
\end{definition}

We will often, when it is clear from the context which $R$-system $(P,Q,\psi)$ we are working with, simply say that $J$ satisfies condition (K) instead of saying that it satisfies condition (K) with respect to $(P,Q,\psi)$.
\begin{theorem}
	\label{thm:K} The following 3 conditions are equivalent: 
	\begin{enumerate}
		\item \label{item:21} Every ideal of $\OCK$ is graded. 
		\item \label{item:22} The ideal $J$ satisfies condition (K). 
		\item \label{item:23} If $(S',T',\sigma',B)$ is a covariant representation of $(P,Q,\psi)$ which is Cuntz-Pimsner invariant relative to $J$, and $(I,J')=\omega_{(S',T',\sigma',B)}$, then the ring homomorphism $\eta^{(I,J')}_{(S',T',\sigma',B)}:\mathcal{O}_{({}_IP,Q_I,\psi_I)}(J'_I)\to B$ from \cite[Theorem 7.11 (ii)]{CaOr2011} is injective. 
	\end{enumerate}
\end{theorem}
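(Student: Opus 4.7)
The plan is to reduce each equivalence to Theorem~\ref{thm:L} by passing to the quotient Cuntz--Pimsner rings indexed by $T$-pairs. The crucial input is \cite[Theorem 7.27]{CaOr2011}: graded ideals of $\OCK$ are in bijection with $T$-pairs $(I,J')$ satisfying $J\subseteq J'$, and for such a pair one has $\OCK/H^J_{(I,J')}\cong \mathcal{O}_{({}_IP,Q_I,\psi_I)}(J'_I)$. I will also use that for any ideal $K$ of $\OCK$, the subset $H:=\bigoplus_{k\in\Z}(K\cap\FCK[k])$ is the largest graded ideal contained in $K$, and that any ideal $L\supseteq H^J_{(I,J')}$ whose image in the quotient is graded is itself graded.

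For $\eqref{item:22}\Rightarrow\eqref{item:21}$, let $K$ be an ideal of $\OCK$ and let $H\subseteq K$ be its largest graded subideal, written as $H=H^J_{(I,J')}$. Condition~\eqref{item:22} says $J'_I$ satisfies condition~(L), so by Theorem~\ref{thm:L} every non-zero ideal of $\OCK/H\cong \mathcal{O}_{({}_IP,Q_I,\psi_I)}(J'_I)$ contains a non-zero graded ideal. If $K/H$ were non-zero, it would contain a non-zero graded ideal that pulls back to a graded ideal of $\OCK$ strictly between $H$ and $K$, contradicting maximality of $H$. Hence $K=H$ is graded. For the converse $\eqref{item:21}\Rightarrow\eqref{item:22}$, ideals of $\mathcal{O}_{({}_IP,Q_I,\psi_I)}(J'_I)$ correspond to ideals of $\OCK$ containing $H^J_{(I,J')}$, which are all graded by \eqref{item:21}; so every ideal of the quotient is graded, every non-zero such ideal trivially contains a non-zero graded ideal, and Theorem~\ref{thm:L} supplies condition~(L) for $J'_I$.

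For $\eqref{item:22}\Leftrightarrow\eqref{item:23}$, the key point is that a representation $(S',T',\sigma',B)$ Cuntz--Pimsner invariant relative to $J$ with $\omega_{(S',T',\sigma',B)}=(I,J')$ factors through $\OCK/H^J_{(I,J')}\cong \mathcal{O}_{({}_IP,Q_I,\psi_I)}(J'_I)$ as an injective covariant representation $(\tilde S,\tilde T,\tilde\sigma,B)$ of $({}_IP,Q_I,\psi_I)$ satisfying $J_{(\tilde S,\tilde T,\tilde\sigma,B)}=J'_I$, and under this identification $\eta^{(I,J')}_{(S',T',\sigma',B)}$ coincides with $\eta^{J'_I}_{(\tilde S,\tilde T,\tilde\sigma,B)}$. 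Injectivity of the latter is precisely condition~\eqref{item:15} of Theorem~\ref{thm:L} applied to $\mathcal{O}_{({}_IP,Q_I,\psi_I)}(J'_I)$, which yields $\eqref{item:22}\Rightarrow\eqref{item:23}$. For $\eqref{item:23}\Rightarrow\eqref{item:21}$, take any ideal $K$ and apply \eqref{item:23} to the quotient representation $(\wp\circ S,\wp\circ T,\wp\circ\sigma,\OCK/K)$ with associated $T$-pair $(I,J')$; then $\wp$ factors through $\OCK/H^J_{(I,J')}$ via an injective $\eta^{(I,J')}$, forcing $K=H^J_{(I,J')}$ to be graded.

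The main obstacle will be the bookkeeping in $\eqref{item:22}\Leftrightarrow\eqref{item:23}$: one must carefully verify that the definition of $\omega_{(S',T',\sigma',B)}$ is precisely what makes the induced representation on the quotient system $({}_IP,Q_I,\psi_I)$ both injective and satisfy $J_{(\tilde S,\tilde T,\tilde\sigma,B)}=J'_I$, so that Theorem~\ref{thm:L}\eqref{item:15} can be invoked directly inside the quotient.
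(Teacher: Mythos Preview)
Your proposal is correct and follows essentially the same approach as the paper: both reduce all three implications to Theorem~\ref{thm:L} applied inside the quotient rings $\mathcal{O}_{({}_IP,Q_I,\psi_I)}(J'_I)$ via the $T$-pair bijection of \cite[Theorem~7.27]{CaOr2011}, and the bookkeeping you flag in $\eqref{item:22}\Leftrightarrow\eqref{item:23}$ is exactly what the paper handles by citing \cite[Lemma~7.10]{CaOr2011}. Your direct argument for $\eqref{item:22}\Rightarrow\eqref{item:21}$ via the largest graded subideal is a pleasant variant (the paper instead closes the cycle through $\eqref{item:23}$), but it is the same idea applied to a different clause of Theorem~\ref{thm:L} and is in fact redundant once you have $\eqref{item:22}\Rightarrow\eqref{item:23}\Rightarrow\eqref{item:21}$.
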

\begin{proof}
	$\eqref{item:21}\Rightarrow\eqref{item:22}$: Let $\omega=(I,J')$ be a $T$-pair of $(P,Q,\psi)$ such that $J\subseteq J'$ and let $H$ be a non-zero ideal in $\mathcal{O}_{({}_IP,Q_I,\psi_I)}(J'_I)$. Recall from \cite[Page 36]{CaOr2011} that there is a covariant representation $(\iota^\omega_P, \iota^\omega_Q, \iota^\omega_R,\mathcal{O}_{({}_IP,Q_I,\psi_I)}(J'_I))$ such that $\iota^\omega_P=\iota^{J'_I}_{{}_IP}\circ\wp_I$, $\iota^\omega_Q=\iota^{J'_I}_{Q_I}\circ\wp_I$ and $\iota^\omega_R=\iota^{J'_I}_{R_I}\circ\wp_I$. It follows from \cite[Remark 3.25 and Theorem 3.29]{CaOr2011} that there is a surjective graded ring homomorphism $\phi:\OCK\to \mathcal{O}_{({}_IP,Q_I,\psi_I)}(J'_I)$ which intertwines the two representations $(S,T,\sigma,\OCK)$ and $(\iota^\omega_P, \iota^\omega_Q, \iota^\omega_R,\mathcal{O}_{({}_IP,Q_I,\psi_I)}(J'_I))$. We then have that $\phi^{-1}(H)$ is an ideal in $\OCK$. Thus $\phi^{-1}(H)$ is graded by assumption. It follows that also $H$ is graded. It therefore follows from Theorem \ref{thm:L} that $J'_I$ satisfies condition (L) with respect to the $R_I$-system $({}_IP,Q_I,\psi_I)$. This proves that $J$ satisfies condition (K).
	
	$\eqref{item:22}\Rightarrow\eqref{item:23}$: It follows from \cite[Lemma 7.10]{CaOr2011} that there is an injective covariant representation $(S_I,T_I,\sigma_I,B)$ of $({}_IP,Q_I,\psi_I)$ such that $S_I=S'\circ\wp_I$, $T_I=T'\circ\wp_I$ and $\sigma_I=\sigma'\circ\wp_I$. Since $\pi_{T_I,S_I}(\mathcal{F}_{{}_IP}(Q_I))=\pi_{T',S'}(\mathcal{F}_P(Q))$, it follows that $J_{(S_I,T_I,\sigma_I,B)}=\wp_I(J_{(S',T',\sigma',B)})=\wp_I(J')=J'_I$. It therefore follows from Theorem \ref{thm:L} that $\eta^{(I,J')}_{(S',T',\sigma',B)}=\eta^{J'}_{(S_I,T_I,\sigma_I,B)}$ is injective.
	
	$\eqref{item:23}\Rightarrow\eqref{item:21}$: Let $H$ be an ideal in $\OCK$ and let $\wp:\OCK\to\OCK/H$ be the quotient map. Then $(\wp\circ S,\wp\circ T,\wp\circ\sigma,\OCK/H)$ is a covariant representation which is Cuntz-Pimsner invariant relative to $J$. Let $(I,J')=\omega_{(\wp\circ S,\wp\circ T,\wp\circ\sigma,\OCK/H)}$. Then $\eta^{(I,J')}_{(\wp\circ S,\wp\circ T,\wp\circ\sigma,\OCK/H)}$ is injective by assumption. Since $\oplus_{n\in\Z}\mathcal{O}^{(n)}_{({}_IP,Q_I,\psi_I)}(J'_I)$ is a $\Z$-grading of $\mathcal{O}_{({}_IP,Q_I,\psi_I)}(J'_I)$, it follows that 
	\begin{equation*}
		\oplus_{n\in\Z}\wp(\mathcal{O}^{(n)}_{(P,Q,\psi)}(J)) =\oplus_{n\in\Z} \eta^{(I,J')}_{(\wp\circ S,\wp\circ T,\wp\circ\sigma,\OCK/H)} (\mathcal{O}^{(n)}_{({}_IP,Q_I,\psi_I)}(J'_I)) 
	\end{equation*}
	is a $\Z$-grading of $\OCK/H$. Thus $H$ is graded. 
\end{proof}
\begin{remark}
	\label{remark} It follows from the above theorem that if $J$ satisfies condition (K), then \cite[Theorem 7.27]{CaOr2011} gives a bijective correspondence between the set of all ideals of $\OCK$ and the set of $T$-pairs $(I,J')$ of $(P,Q,\psi)$ satisfying $J\subseteq J'$. 
\end{remark}

\section{Toeplitz rings} \label{sec:toeplitz-rings}

When $J=\{0\}$, then $\OCK$ is the Toeplitz ring $\toeplitz$ and $J$ automatically satisfies condition (L). Thus the following 3 corollaries follow from Theorem \ref{thm:L}, Theorem \ref{thm:CK} and Theorem \ref{thm:simple}, respectively.

\begin{cor}
	If $(S',T',\sigma',B)$ is an injective covariant representation of $(P,Q,\psi)$, then the ring homomorphism $\eta_{(S',T',\sigma',B)}:\toeplitz\to B$ from \cite[Theorem 1.7]{CaOr2011} is injective if and only if $J_{(S',T',\sigma',B)}=\{0\}$. 
\end{cor}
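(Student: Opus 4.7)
The plan is to obtain this corollary as an immediate specialization of Theorem~\ref{thm:L} to the case $J=\{0\}$. In that case $\OCK$ is the Toeplitz ring $\toeplitz$, Cuntz-Pimsner invariance relative to $J$ is vacuous (so every covariant representation is automatically Cuntz-Pimsner invariant relative to $\{0\}$), and the map $\eta^{J}_{(S',T',\sigma',B)}$ appearing in Theorem~\ref{thm:L}(4) coincides with the universal Toeplitz map $\eta_{(S',T',\sigma',B)}$ of \cite[Theorem 1.7]{CaOr2011}. The first thing to verify is that condition (L) holds automatically when $J=\{0\}$: one has $J^{[k]}=\{0\}$ for every $k\in\N$, hence $J^{[\infty]}=\{0\}$, so there are no non-zero $\psi$-invariant ideals contained in $J^{[\infty]}$ at all, let alone non-zero $\psi$-invariant cycles. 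Thus hypothesis (1) of Theorem~\ref{thm:L} is satisfied.

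For the backward implication I would simply invoke Theorem~\ref{thm:L}(4): if $J_{(S',T',\sigma',B)}=\{0\}$, then $J=J_{(S',T',\sigma',B)}$ and the theorem yields injectivity of $\eta_{(S',T',\sigma',B)}$.

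For the forward implication, assume $\eta=\eta_{(S',T',\sigma',B)}$ is injective. Since $\eta$ intertwines $\sigma$ with $\sigma'$ and $\pi_{T,S}$ with $\pi_{T',S'}$ (the latter because $\eta\circ T=T'$ and $\eta\circ S=S'$, and $\mathcal{F}_P(Q)$ is spanned by the rank-one operators $\theta_{q,p}$), an element $x\in R$ satisfies $\sigma(x)\in\pi_{T,S}(\mathcal{F}_P(Q))$ if and only if $\sigma'(x)\in\pi_{T',S'}(\mathcal{F}_P(Q))$; that is, $J_{(S,T,\sigma,\toeplitz)}=J_{(S',T',\sigma',B)}$. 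By \cite[Proposition 3.28]{CaOr2011} the left-hand side equals $J=\{0\}$, so $J_{(S',T',\sigma',B)}=\{0\}$. No real obstacle arises in the proof; the whole content is already packaged in Theorem~\ref{thm:L} together with the trivial verification of condition (L) for $J=\{0\}$.
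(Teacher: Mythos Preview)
Your proof is correct and matches the paper's approach: the paper simply states that when $J=\{0\}$ condition (L) holds automatically and then cites Theorem~\ref{thm:L}, leaving all details implicit. You have filled in exactly what is needed---the backward direction via Theorem~\ref{thm:L}(4), and the forward direction via the elementary observation that injectivity of $\eta$ forces $J_{(S',T',\sigma',B)}=J_{(S,T,\sigma,\toeplitz)}$, the latter being $\{0\}$ by \cite[Proposition 3.28]{CaOr2011}.
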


\begin{cor}\label{cor:Tuniq}
	Assume that there are no non-zero faithful $\psi$-compatible ideals of $R$. If $(S_1,T_1,\sigma_1,B_1)$ and $(S_2,T_2,\sigma_2,B_2)$ are two injective covariant representations of $(P,Q,\psi)$, then there is a ring isomorphism $\phi$ between $\mathcal{R}\langle S_1,T_1,\sigma_1\rangle$ and $\mathcal{R}\langle S_2,T_2,\sigma_2\rangle$ such that $\phi\circ\sigma_1=\sigma_2$, $\phi\circ S_1=S_2$ and $\phi\circ T_1=T_2$. 
\end{cor}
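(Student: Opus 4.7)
The plan is to deduce this directly from Theorem \ref{thm:CK}, applied in the special case $J=\{0\}$, which is exactly the setup in which $\OCK$ equals the Toeplitz ring $\toeplitz$. The only work is to verify the two input hypotheses of condition \eqref{item:11} of that theorem and to observe that no Cuntz-Pimsner invariance condition is being lost in the translation from Theorem \ref{thm:CK} to the statement of the corollary.

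First, I would argue that $J=\{0\}$ is a maximal $\psi$-compatible ideal. By hypothesis, the only faithful $\psi$-compatible ideal of $R$ is $\{0\}$ itself, so any faithful $\psi$-compatible ideal $J'$ with $\{0\}\subseteq J'$ must equal $\{0\}$. This is exactly the definition of maximality recalled just before Theorem \ref{thm:CK}. Second, the opening line of Section \ref{sec:toeplitz-rings} already records that when $J=\{0\}$, condition (L) is automatically satisfied (indeed, the defining condition $I\subseteq J^{[\infty]}=\{0\}$ forces $I=\{0\}$, so there is no non-zero invariant cycle to worry about). Hence \eqref{item:11} of Theorem \ref{thm:CK} holds, and therefore so does \eqref{item:9}: the ideal $J=\{0\}$ has the Cuntz-Krieger uniqueness property.

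Next I would unpack what the Cuntz-Krieger uniqueness property says in this case. Given two injective covariant representations $(S_1,T_1,\sigma_1,B_1)$ and $(S_2,T_2,\sigma_2,B_2)$ of $(P,Q,\psi)$, I need to check that each is Cuntz-Pimsner invariant relative to $J=\{0\}$ in order to invoke Definition \ref{def:CK}. But Cuntz-Pimsner invariance relative to $J$ is a condition imposed only on elements of $J$, so relative to $\{0\}$ it is vacuous and holds for every covariant representation. Consequently Definition \ref{def:CK} produces a ring isomorphism $\phi:\mathcal{R}\langle S_1,T_1,\sigma_1\rangle\to\mathcal{R}\langle S_2,T_2,\sigma_2\rangle$ with $\phi\circ\sigma_1=\sigma_2$, $\phi\circ S_1=S_2$, and $\phi\circ T_1=T_2$, which is exactly the desired conclusion.

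There is no real obstacle here; the only thing that has to be verified carefully is that the hypothesis of the corollary really gives the maximality of $\{0\}$ as a $\psi$-compatible ideal (rather than just something weaker), and that the absence of a Cuntz-Pimsner invariance assumption in the corollary is harmless because invariance relative to $\{0\}$ is automatic. Both are immediate from the definitions, so the proof should be essentially one short paragraph invoking Theorem \ref{thm:CK}.
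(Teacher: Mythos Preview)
Your proposal is correct and is exactly the approach the paper intends: the corollary is stated immediately after the observation that $J=\{0\}$ automatically satisfies condition (L), and the paper simply says it follows from Theorem \ref{thm:CK}. Your verification that the hypothesis forces $\{0\}$ to be a maximal $\psi$-compatible ideal, and that Cuntz-Pimsner invariance relative to $\{0\}$ is vacuous, fills in precisely the details the paper leaves implicit.
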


\begin{cor}
	The Toeplitz ring $\toeplitz$ is simple if and only if $(0,0)$ and $(R,R)$ are the only $T$-pairs of $(P,Q,\psi)$. 
\end{cor}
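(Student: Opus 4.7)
The plan is to derive this corollary as a direct specialization of Theorem \ref{thm:simple}, using the equivalence between condition \eqref{item:16} (the ring $\OCK$ is simple) and condition \eqref{item:19} ($J$ satisfies condition (L) and is a super maximal $\psi$-compatible ideal). Both requirements in \eqref{item:19} simplify dramatically in the case $J=\{0\}$, and the corollary will essentially fall out once those simplifications are noted.

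First I would verify that condition (L) is automatic when $J=\{0\}$. From the recursive definition, $J^{[1]}=J=\{0\}$ and hence $J^{[k]}=\psi^{-1}(J^{[k-1]})\cap J\subseteq J=\{0\}$ for all $k\geq 2$, so $J^{[\infty]}=\{0\}$. The only $\psi$-invariant ideal contained in $\{0\}$ is $\{0\}$ itself, which is not a \emph{non-zero} invariant cycle, so Definition \ref{def:L} is vacuously satisfied. (This is also exactly what the introductory remark of Section \ref{sec:toeplitz-rings} asserts.)

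Next I would unwind the super maximality condition in this case. By definition, $J=\{0\}$ is super maximal iff the only $T$-pairs $(I,J')$ with $J\subseteq J'$ are $(0,J)=(0,0)$ and $(R,R)$. Since every ideal of $R$ contains $\{0\}$, the hypothesis $J\subseteq J'$ is automatic, so super maximality of $\{0\}$ is equivalent to: the only $T$-pairs of $(P,Q,\psi)$ are $(0,0)$ and $(R,R)$.

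Combining the two previous observations with the equivalence $\eqref{item:16}\Leftrightarrow\eqref{item:19}$ of Theorem \ref{thm:simple} applied to $J=\{0\}$ (so that $\OCK=\toeplitz$) yields the corollary. There is no genuine obstacle here; the content of the corollary is already carried by Theorem \ref{thm:simple}, and the remaining work is purely definitional bookkeeping to specialize $J$ to the zero ideal.
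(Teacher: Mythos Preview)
Your proposal is correct and follows essentially the same approach as the paper: the paper notes (at the start of Section~\ref{sec:toeplitz-rings}) that $J=\{0\}$ automatically satisfies condition (L) and then simply states that the corollary follows from Theorem~\ref{thm:simple}. Your unwinding of the super maximality condition and of $J^{[\infty]}=\{0\}$ just makes explicit the definitional bookkeeping the paper leaves implicit.
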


\section{Leavitt path algebras} \label{sec:leavitt} 
We will in this section show how we can recover from the results obtained in this paper Theorem 6.8, Corollary 6.10, Theorem 6.16, Corollary 6.17 and Theorem 6.18 of \cite{To2007} and obtain an algebraic analogue of \cite[Theorem 4.1]{FoRa1999}.

Let $(E^0,E^1,r,s)$ be a directed graph (ie. $E^0$ and $E^1$ are sets and $r$ and $s$ are maps from $E^1$ to $E^0$) and let $F$ be a field. When $n$ is a positive integer, then we let $E^n$ be the set $\{(e_1,e_2,\dots,e_n)\in E^1\times E^1\times\dots\times E^1\mid r(e_i)=s(e_{i+1})\text{ for }i=1,2,\dots,n-1\}$. For $\alpha=(e_1,e_2,\dots,e_n)\in E^n$ we define $s(\alpha)$ to be $s(e_1)$ and $r(\alpha)$ to be $r(e_n)$. For each $v\in E^0$ we let $vE^n$ denote the set $\{\alpha\in E^n\mid s(\alpha)=v\}$ and we let $E^nv$ denote the set $\{\alpha\in E^n\mid r(\alpha)=v\}$. A \emph{closed path} is an $\alpha\in E^n$ such that $r(\alpha)=s(\alpha)$. The element $s(\alpha)$ is called the \emph{base} of $\alpha$. A closed path $\alpha=(e_1,e_2,\dots,e_n)$ is said to be \emph{simple} if $s(e_i)\ne s(e_1)$ for each $i=2,3,\dots,n$, and to have an \emph{exit} if $|s(e_i)E^1|>1$ for some $i\in\{1,2,\dots,n\}$.

Following \cite[Example 5.8]{CaOr2011} we define $R$ be the ring $\oplus_{v\in E^0}R_v$ where each $R_v$ is a copy of $F$; we let $Q$ be $R$-bimodule $\oplus_{e\in E^1}Q_e$ where each $Q_e$ is a copy of $F$ and the left and the right multiplication are defined by 
\begin{align*}
	\left(\sum_{e\in E^1} q_e\textbf{1}_{e}\right)\cdot \left(\sum_{v\in E^0} r_v\textbf{1}_{v}\right) &= \sum_{e\in E^1} q_e r_{r(e)}\textbf{1}_{e} \\
	\left(\sum_{v\in E^0} r_v\textbf{1}_{v}\right)\cdot\left(\sum_{e\in E^1} q_e\textbf{1}_{e}\right) &= \sum_{e\in E^1} r_{s(e)} q_e\textbf{1}_{e} 
\end{align*}
where $\textbf{1}_v$ denotes the unit of $R_v$, $\textbf{1}_e$ denotes the unit of $Q_e$, and $\{r_v\}_{v\in E^0}$ and $\{q_e\}_{e\in E^1}$ are families of elements from $F$ with only a finite number of non-zero elements; we let $P$ be the $R$-bimodule $\oplus_{e\in E^1} P_e$ where each $P_e$ is a copy of $F$ and the left and the right multiplication are defined by 
\begin{align*}
	\left(\sum_{e\in E^1} p_e\textbf{1}_{\overline{e}}\right)\cdot \left(\sum_{v\in E^0} r_v\textbf{1}_{v}\right) &= \sum_{e\in E^1} p_e r_{s(e)}\textbf{1}_{\overline{e}} \\
	\left(\sum_{v\in E^0} r_v\textbf{1}_{v}\right)\cdot\left(\sum_{e\in E^1} p_e\textbf{1}_{\overline{e}}\right) &= \sum_{e\in E^1} r_{r(e)} p_e\textbf{1}_{\overline{e}} 
\end{align*}
where $\textbf{1}_{\overline{e}}$ denotes the unit of $P_e$, and $\{r_v\}_{v\in E^0}$ and $\{p_e\}_{e\in E^1}$ are families of elements from $F$ with only a finite number of non-zero elements; and we define $\psi:P\otimes_R Q \to R$ to be the $R$-bimodule homomorphism given by 
\begin{equation*}
	\left(\sum_{e\in E^1} p_e\textbf{1}_{\overline{e}}\right)\otimes \left(\sum_{e\in E^1} q_e\textbf{1}_{e}\right)\mapsto \sum_{v\in E^0} \left(\sum_{e\in E^1v} p_{e} q_e \right)\textbf{1}_{v}, 
\end{equation*}
then $(P,Q,\psi)$ is an $R$-system. Recall also that if we let $J$ be the ideal $\text{span}_F\{\textbf{1}_v\mid v\in E^0,\ 0<|vE^1|<\infty \}\subseteq R$, then $J$ is a maximal faithful $\psi$-compatible ideal and $\OCK$ is isomorphic to the Leavitt path algebra of $(E^0,E^1)$ (see for example \cite{AbAr2005,AbAr2008} and \cite{To2007}). It is straightforward to check that $J^{[n]}=\text{span}_F\{\textbf{1}_v\mid v\in E^0,\ 0<|vE^n|<\infty \}$ for each $n\in\N$ from which it follows that $J^{[\infty]}=\text{span}_F\{\textbf{1}_v\mid v\in E^0,\ 0<|vE^n|<\infty \text{ for all }n\in\N\}$.

Suppose that $I$ is a non-zero $\psi$-invariant cycle and let $\eta:I\to Q^{\otimes n}$ be an injective $R$-bimodule homomorphism satisfying $S_pT_{\eta(x)}(q)=\eta(\psi(px\otimes q))$ for $p\in P$, $x\in I$ and $q\in Q$. We will prove that it follows that $(E^0,E^1,r,s)$ has a closed path without an exit. We can, and will, identify $Q^{\otimes n}$ with the $R$-bimodule $\oplus_{\alpha\in E^n}Q_\alpha$ where each $Q_\alpha$ is a copy of $F$ and the left and the right multiplication are defined by
\begin{align*}
	\left(\sum_{\alpha\in E^n} q_\alpha\textbf{1}_{\alpha}\right)\cdot \left(\sum_{v\in E^0} r_v\textbf{1}_{v}\right) &= \sum_{\alpha\in E^n} q_\alpha r_{r(\alpha)}\textbf{1}_{\alpha} \\
	\left(\sum_{v\in E^0} r_v\textbf{1}_{v}\right)\cdot\left(\sum_{\alpha\in E^n} q_\alpha\textbf{1}_{\alpha}\right) &= \sum_{\alpha\in E^n} r_{s(\alpha)} q_\alpha\textbf{1}_{\alpha} 
\end{align*}
where $\textbf{1}_\alpha$ denote the unit of $Q_\alpha$, and $\{r_v\}_{v\in E^0}$ and $\{q_\alpha\}_{\alpha\in E^n}$ are families of elements of $F$ with only a finite number of non-zero elements. Likewise, we identify $P^{\otimes n}$ with the $R$-bimodule $\oplus_{\alpha\in E^n}P_\alpha$ where each $P_\alpha$ is a copy of $F$ and the left and the right multiplication are defined by 
\begin{align*}
	\left(\sum_{\alpha\in E^n} p_\alpha\textbf{1}_{\overline{\alpha}}\right)\cdot \left(\sum_{v\in E^0} r_v\textbf{1}_{v}\right) &= \sum_{\alpha\in E^n} p_\alpha r_{r(\alpha)}\textbf{1}_{\overline{\alpha}} \\
	\left(\sum_{v\in E^0} r_v\textbf{1}_{v}\right)\cdot\left(\sum_{\alpha\in E^n} p_\alpha\textbf{1}_{\overline{\alpha}}\right) &= \sum_{\alpha\in E^n} r_{s(\alpha)} p_\alpha\textbf{1}_{\overline{\alpha}} 
\end{align*}
where $\textbf{1}_{\overline{\alpha}}$ denote the unit of $P_\alpha$, and $\{r_v\}_{v\in E^0}$ and $\{p_\alpha\}_{\alpha\in E^n}$ are families of elements of $F$ with only a finite number of non-zero elements. We then have that $\psi_n:P^{\otimes n}\otimes Q^{\otimes n}\to R$ is given by 
\begin{equation*}
	\left(\sum_{\alpha\in E^n} p_\alpha\textbf{1}_{\overline{\alpha}}\right)\otimes \left(\sum_{\alpha\in E^n} q_\alpha\textbf{1}_{\alpha}\right)\mapsto \sum_{v\in E^0} \left(\sum_{\alpha\in E^nv} p_{\alpha} q_\alpha \right)\textbf{1}_{v}. 
\end{equation*}

Let $H$ be the set $\{v\in E^0\mid \textbf{1}_v\in I\}$. It follows from the $\psi$-invariance of $I$ that $H$ is hereditary (that is, whenever $e\in E^1$ with $s(e)\in H$, then $r(e)\in H$). Let $v\in H$. Then $\eta(\textbf{1}_v)=\sum_{\alpha\in K}f_\alpha\textbf{1}_\alpha$ for some non-empty finite subset $K\subseteq E^n$ and non-zero elements $f_\alpha\in F,\ \alpha\in K$. Since $\textbf{1}_v\eta(\textbf{1}_v)\textbf{1}_v= \eta(\textbf{1}_v\textbf{1}_v\textbf{1}_v)=\eta(\textbf{1}_v)$, it follows that $r(\alpha)=s(\alpha)=v$ for each $\alpha\in K$. Let $\alpha\in E^n$ with $r(\alpha)=s(\alpha)=v$. Since 
\begin{equation*}
	\psi_n\bigl(\textbf{1}_{\overline{\alpha}}\otimes\eta(\textbf{1}_v)\bigr)\textbf{1}_\alpha= \eta\bigl(\psi_n(\textbf{1}_{\overline{\alpha}}\textbf{1}_v\otimes\textbf{1}_\alpha)\bigr)= \eta(\textbf{1}_v), 
\end{equation*}
it follows that $K\subseteq\{\alpha\}$. Hence it must be the case that there is exactly one $\alpha_v\in E^n$ with $r(\alpha)=s(\alpha)=v$, and that $K$ consists of this element. Thus there is for each $v\in H$ a unique $\alpha_v\in E^n$ with $r(\alpha)=s(\alpha)=v$ and $\eta(\textbf{1}_v)=f_{\alpha_v}\textbf{1}_{\alpha_v}$ for some $f_{\alpha_v}\in F\setminus\{0\}$.

Let $v\in H$, let $\alpha_v=(e_1,e_2,\dots,e_n)$ and assume that there is an $e'\in E^1\setminus\{e_1\}$ with $s(e)=v$. Then 
\begin{equation*}
	\eta(\textbf{1}_{r(e)})=\eta\bigl(\psi(\textbf{1}_{\overline{e}}\textbf{1}_v\otimes\textbf{1}_e)\bigr)= S_{\textbf{1}_{\overline{e}}}T_{\eta(\textbf{1}_v)}\textbf{1}_e= f_{\alpha_v}S_{\textbf{1}_{\overline{e}}}T_{\textbf{1}_{\alpha_v}}\textbf{1}_e=0 
\end{equation*}
which contradicts the fact that $\eta$ is injective. Thus, for each $v\in H$ it is the case that $vE^1=\{e_1\}$ where $e_1$ is the initial part of $\alpha_v$. It follows that every $v\in H$ is the base of a closed path which has no exit. In particular, $(E^0,E^1,r,s)$ has a closed path which has no exit.

On the other hand, it is straightforward to check that if $\alpha_v=(e_1,e_2,\dots,e_n)$ is a closed path without an exit, then $H=\{s(e_i)\mid i\in\{1,2,\dots,n\}$ is a hereditary subset of $E^0$, $I=\text{span}_F\{\textbf{1}_v\mid v\in H\}$ is contained in $J^{[\infty]}$ and is a $\psi$-invariant ideal in $R$, and the $F$-linear map $\eta:I\to Q^{\otimes n}$ given by $\textbf{1}_{s(e_i)}\mapsto\textbf{1}_{(e_i,e_{i+1},\dots,e_n,e_1,e_2,\dots,e_{i-1})}$ for $i\in\{1,2,\dots,n\}$ is an injective $R$-bimodule homomorphism $\eta:I\to Q^{\otimes n}$ satisfying $S_pT_{\eta(x)}(q)=\eta(\psi(px\otimes q))$ for $p\in P$, $x\in I$ and $q\in Q$. Thus $J$ satisfies condition (L) if and only every closed path in $(E^0,E^1,r,s)$ has an exit (cf. \cite[Definition 6.3]{To2007}). We therefore recover \cite[Theorem 6.8 and Corollary 6.10]{To2007} from Theorem \ref{thm:CK}. By combining \cite[Theorem 5.7 and Proposition 6.12]{To2007} and \cite[Example 7.31]{CaOr2011} with the above characterization of when $J$ satisfies condition (L), one sees that $J$ satisfies condition (K) if and only if every $v\in E^0$ is either the base of no closed path or the base of at least two simple closed paths (cf. \cite[Definition 6.11]{To2007}). We therefore recover \cite[Theorem 6.16 and Corollary 6.17]{To2007} from Theorem \ref{thm:K} and Remark \ref{remark}. Finally, it follows from \cite[Theorem 5.7]{To2007} (cf. \cite[Example 7.31]{CaOr2011}) and Remark \ref{remark:supermax} that $J$ is super maximal if and only if the only saturated hereditary subsets of $E^0$ are $\emptyset$ and $E^0$, thus we recover \cite[Theorem 6.18]{To2007} from Theorem \ref{thm:simple} and the above characterization of when $J$ satisfies condition (L).

We will end this subsection by using Corollary \ref{cor:Tuniq} to give a uniqueness theorem for the Toeplitz ring $\mathcal{T}_{(P,Q,\psi)}=\mathcal{O}_{(P,Q,\psi)}(0)$. 

\begin{definition}\label{def:TCK}
	Let $E=(E^0,E^1,r,s)$ be a directed graph, let $F$ be a field and $B$ an $F$-algebra. A \emph{Toeplitz-Cuntz-Krieger $E$-family} in $B$ consists of a family $\{p_v\mid v\in E^0\}$ of pairwise orthogonal idempotents in $B$ together with a family $\{x_e,y_e\mid e\in E^1\}$ of elements in $B$ satisfying the following relations
	\begin{enumerate}
	  \item $p_{s(e)}x_{e}=x_{e}=x_{e}p_{r(e)}$ for
	    $e\in E^1$,
	  \item $p_{r(e)}y_{e}=y_{e}=y_{e}p_{s(e)}$
	    for $e\in E^1$,
	  \item $y_{e}x_{f}=\delta_{e,f}p_{r(e)}$ for $e,f\in E^1$,
	  \end{enumerate}
	where $\delta_{e,f}$ denotes the Kronecker's delta function.
\end{definition}

\begin{theorem}\label{thm:TCK}
	Let $E=(E^0,E^1,r,s)$ be a directed graph and let $F$ be a field. Let $R$ and $(P,Q,\psi)$ be as defined above and let $(S,T,\sigma,\mathcal{T}_{(P,Q,\psi)})$ be the Toeplitz representation of $(P,Q,\psi)$. Then $\{\sigma(\textbf{1}_v)\mid v\in E^0\}$ together with $\{T(\textbf{1}_e), S(\textbf{1}_{\overline{e}})\mid e\in E^1\}$ is a Toeplitz-Cuntz-Krieger $E$-family. If $B$ is an $F$-algebra and $\{p_v\mid v\in E^0\}$ together with $\{x_e,y_e\mid e\in E^1\}$ is a Toeplitz-Cuntz-Krieger $E$-family, then there exists a unique $F$-algebra homomorphism $\eta:\mathcal{T}_{(P,Q,\psi)}\to B$ satisfying $\eta(\sigma(\textbf{1}_v))=p_v$ for $v\in E^0$, and $\eta(T(\textbf{1}_e))=x_e$ and $\eta(S(\textbf{1}_{\overline{e}}))=y_e$ for $e\in E^1$. The homomorphism $\eta$ is injective if and only if $p_v\ne 0$ for each $v\in E^0$ and $p_v\ne\sum_{e\in vE^1}x_ey_e$ for $v\in E^0$ with $0<\abs{vE^1}<\infty$.
\end{theorem}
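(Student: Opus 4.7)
The plan is to dispatch the three claims in order, the real work being the injectivity criterion. The first claim — that $\{\sigma(\textbf{1}_v),T(\textbf{1}_e),S(\textbf{1}_{\overline{e}})\}$ is a Toeplitz-Cuntz-Krieger $E$-family — reduces to direct computations: pairwise orthogonality of the $\sigma(\textbf{1}_v)$ comes from $\textbf{1}_v\textbf{1}_w=\delta_{v,w}\textbf{1}_v$ in $R$; relations (1) and (2) of Definition \ref{def:TCK} follow from the bimodule laws $\textbf{1}_v\textbf{1}_e=\delta_{v,s(e)}\textbf{1}_e$ and $\textbf{1}_e\textbf{1}_v=\delta_{v,r(e)}\textbf{1}_e$ in $Q$ (and their analogues in $P$); and relation (3) follows from the covariance identity $S(p)T(q)=\sigma(\psi(p\otimes q))$ combined with $\psi(\textbf{1}_{\overline{e}}\otimes\textbf{1}_f)=\delta_{e,f}\textbf{1}_{r(e)}$.

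For the universal property I would, given a Toeplitz-Cuntz-Krieger $E$-family $\{p_v\}\cup\{x_e,y_e\}$ in $B$, define $F$-linear maps $\sigma':R\to B$, $T':Q\to B$ and $S':P\to B$ by $\sigma'(\textbf{1}_v)=p_v$, $T'(\textbf{1}_e)=x_e$ and $S'(\textbf{1}_{\overline{e}})=y_e$. The three TCK axioms translate exactly into the requirements that $(S',T',\sigma',B)$ be a covariant representation of $(P,Q,\psi)$, so the universal property of the Toeplitz ring (\cite[Theorem 1.7]{CaOr2011}) supplies the unique homomorphism $\eta$.

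For the injectivity criterion I would invoke the first corollary of Section \ref{sec:toeplitz-rings}: $\eta$ is injective iff $(S',T',\sigma',B)$ is an injective representation and $J_{(S',T',\sigma',B)}=\{0\}$. The first condition is equivalent to $p_v\ne 0$ for every $v$, because $R=\bigoplus_v F\textbf{1}_v$ with the $p_v$ orthogonal idempotents, and then $y_fx_e=\delta_{e,f}p_{r(e)}$ forces injectivity of $T'$ and $S'$ as well. For the vanishing of $J_{(S',T',\sigma',B)}$, the key observation is that if $x=\sum_{v\in V}r_v\textbf{1}_v$ satisfies $\sigma'(x)=\pi_{T',S'}(\theta)$ for some $\theta\in\mathcal{F}_P(Q)$, then applying both sides to $T'(q)$ and using injectivity of $T'$ identifies the left action of $x$ on $Q$ with the finite-rank operator $\theta$; this rules out infinite emitters from $V$, and multiplying by $p_v$ rules out sinks (since $\textbf{1}_vQ=0$ kills the $\theta$-side while leaving $r_vp_v$). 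Thus $V$ consists only of regular vertices and $\theta$ is forced to be $\sum_v r_v\sum_{e\in vE^1}\theta_{\textbf{1}_e,\textbf{1}_{\overline{e}}}$, so the equation becomes $\sum r_vp_v=\sum r_v\sum_{e\in vE^1}x_ey_e$; multiplying by $p_u$ and using the TCK relations collapses this to $r_u(p_u-\sum_{e\in uE^1}x_ey_e)=0$, and the hypothesis $p_u\ne\sum_{e\in uE^1}x_ey_e$ together with $F$-faithfulness gives $r_u=0$. The converse is immediate: if $p_v=\sum_{e\in vE^1}x_ey_e$ for some regular $v$, then $\textbf{1}_v$ lies non-trivially in $J_{(S',T',\sigma',B)}$.

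I expect the main obstacle to be the uniqueness analysis isolating $\theta$ from the identity $\sigma'(x)=\pi_{T',S'}(\theta)$, in particular the separate handling of sinks, infinite emitters, and regular vertices; the remainder is routine bookkeeping with the Toeplitz-Cuntz-Krieger relations.
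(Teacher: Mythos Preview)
Your proposal is correct and follows essentially the same route as the paper: build the covariant representation from the Toeplitz--Cuntz--Krieger family, invoke the universal property of $\toeplitz$ from \cite[Theorem~1.7]{CaOr2011}, and then reduce injectivity to the vanishing of $J_{(S',T',\sigma',B)}$ via the first corollary of Section~\ref{sec:toeplitz-rings}. The only real difference is in the computation of $J_{(S',T',\sigma',B)}$: the paper cites \cite[Lemma~3.24]{CaOr2011} to rewrite it as $\{r\in\Delta^{-1}(\mathcal{F}_P(Q))\mid \sigma'(r)=\pi_{T',S'}(\Delta(r))\}$ and then quotes \cite[Example~5.8]{CaOr2011} for $\Delta^{-1}(\mathcal{F}_P(Q))=\spn_F\{\textbf{1}_v\mid 0<\abs{vE^1}<\infty\}$, whereas you redo this computation by hand (ruling out sinks and infinite emitters separately). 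Your direct argument is fine; the step ``$\theta$ is forced to be $\sum_v r_v\sum_{e\in vE^1}\theta_{\textbf{1}_e,\textbf{1}_{\overline{e}}}$'' is justified because operators in $\mathcal{F}_P(Q)\subseteq\mathcal{L}_P(Q)$ are determined by their action on $Q$, so the pointwise identification you obtained from injectivity of $T'$ really is an equality $\theta=\Delta(x)$.
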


\begin{proof}
	That $\mathcal{T}_{(P,Q,\psi)}$ is an $F$-algebra and that $\{\sigma(\textbf{1}_v)\mid v\in E^0\}\cup \{T(\textbf{1}_e), S(\textbf{1}_{\overline{e}})\mid e\in E^1\}$ is a Toeplitz-Cuntz-Krieger $E$-family is proved in \cite[Example 1.10]{CaOr2011}. It is also proved in \cite[Example 1.10]{CaOr2011} that if $B$ is an $F$-algebra and $\{p_v\mid v\in E^0\}$ together with  $\{x_e,y_e\mid e\in E^1\}$ is a Toeplitz-Cuntz-Krieger $E$-family, then there is a covariant representation $(S',T',\sigma',B)$ of $(P,Q,\psi)$ such that $S'(\lambda\textbf{1}_{\overline{e}})=\lambda y_e$ and $T'(\lambda \textbf{1}_e)=\lambda x_e$ for $e\in E^1$ and $\lambda\in F$, and $\sigma'(\lambda\textbf{1}_v)=\lambda p_v$ for $v\in E^0$ and $\lambda\in F$. It then follows from \cite[Theorem 1.7]{CaOr2011} that there is a ring homomorphism $\eta:\mathcal{T}_{(P,Q,\psi)}\to B$ such that $\eta(\sigma(\lambda\textbf{1}_v))=\sigma'(\lambda\textbf{1}_v)=\lambda p_v$ for $v\in E^0$ and $\lambda\in F$, and $\eta(T(\lambda\textbf{1}_e))=T'(\lambda\textbf{1}_e)=\lambda x_e$ and $\eta(S(\lambda\textbf{1}_{\overline{e}}))=S'(\lambda\textbf{1}_{\overline{e}})=\lambda y_e$ for $e\in E^1$ and $\lambda\in F$. It follows that $\eta$ is a $F$-algebra homomorphism and that $\eta(\sigma(\textbf{1}_v))=p_v$ for $v\in E^0$, and $\eta(T(\textbf{1}_e))=x_e$ and $\eta(S(\textbf{1}_{\overline{e}}))=y_e$ for $e\in E^1$. Since $\mathcal{T}_{(P,Q,\psi)}$ is generated, as an $F$-algebra, by $\{\sigma(\textbf{1}_v)\mid v\in E^0\}\cup\{T(\textbf{1}_e), S(\textbf{1}_{\overline{e}})\mid e\in E^1\}$, there cannot be any other $F$-algebra homomorphism from $\mathcal{T}_{(P,Q,\psi)}$ to $B$ which for every $v\in E^0$ maps $\sigma(\textbf{1}_v)$ to $p_v$ and for any $e\in E^1$ maps $T(\textbf{1}_e)$ to $x_e$ and $S(\textbf{1}_{\overline{e}})$ to $y_e$.
	
	The map $\sigma$ is injective by \cite[Theorem 1.7]{CaOr2011}. It follows that if $\eta$ is injective, then $p_v\ne 0$ for each $v\in E^0$. Assume that $p_v\ne 0$ for each $v\in E^0$. Since $R=\oplus_{v\in E^0}R_v$ where each $R_v$ is a copy of $F$, it follows that $\sigma'$ is injective. Thus it follows from Corollary \ref{cor:Tuniq} that $\eta$ is injective if and only if $J_{(S',T',\sigma',B)}=0$. It follows from \cite[Lemma 3.24]{CaOr2011} that 
	\begin{equation*}
		J_{(S',T',\sigma',B)}=\bigl\{r\in\Delta^{-1}(\mathcal{F}_P(Q))\mid \sigma'(r)=\pi_{T',S'}(\Delta(r))\bigr\}.
	\end{equation*}
	It is proved in \cite[Example 5.8]{CaOr2011} that
	\begin{equation*}
		\Delta^{-1}(\mathcal{F}_P(Q))=\spn_F\{\textbf{1}_v\mid 0<\abs{vE^1}<\infty\},
	\end{equation*}
	and is straightforward to check that $\Delta(\textbf{1}_v)=\sum_{e\in vE^1}\theta_{\textbf{1}_e,\textbf{1}\overline{e}}$ if $\textbf{1}_v\in \Delta^{-1}(\mathcal{F}_P(Q))$. It follows that 
	\begin{equation*}
		J_{(S',T',\sigma',B)}=\spn_F\Bigl\{\textbf{1}_v\bigm| 0<\abs{vE^1}<\infty,\ p_v=\sum_{e\in vE^1}x_ey_e\Bigr\}.
	\end{equation*}
		Thus $\eta$ is injective if and only if $p_v\ne 0$ for each $v\in E^0$ and $p_v\ne\sum_{e\in vE^1}x_ey_e$ for $v\in E^0$ with $0<\abs{vE^1}<\infty$.
\end{proof}

Theorem \ref{thm:TCK} is the algebraic analogue of \cite[Theorem 4.1]{FoRa1999}.

\section{Crossed products of a ring by an automorphism and fractional skew monoid rings of a corner isomorphism} \label{sec:cross}

We will in this section use Theorem \ref{thm:simple} to give a characterization of when the fractional skew monoid ring of a ring isomorphism is simple (Corollary \ref{cor:fractional}), and when the crossed product of a ring by an automorphism is simple (Corollary \ref{cor:auto}).

A ring $R$ has \emph{local units} if given any finite set $F\subseteq R$ there exists an idempotent $e\in R$ such that $er=re=r$ for every $r\in F$, in other words, the set of all idempotents of $R$, $\text{Idem}(R)$, is a directed system (with order $e\leq f$ if and only if $ef=fe=e$) and $R=\bigcup_{e\in \text{Idem}(R)} eRe$.

Let $R$ be a ring with local units and let $\alpha:R\to R$ be an injective ring homomorphism such that $\alpha(R)R\alpha(R)\subseteq\alpha(R)$ (notice this is equivalent to $\alpha(R)R\alpha(R)=\alpha(R)$ since $R$ has local units). Recall from \cite[Example 5.6]{CaOr2011} that if $P$ is the $R$-bimodule which is equal to $\spn\{r_1\alpha(r_2)\mid r_1,r_2,\in R\}$ as a set, has the additive structure it inherits from $R$, and has the left and right actions given by $r\cdot p=rp$ and $p\cdot r=p\alpha(r)$ for $r\in R$ and $p\in P$; $Q$ is the $R$-bimodule which is equal to $\spn\{\alpha(r_1)r_2\mid r_1,r_2\in R\}$ as a set, has the additive structure it inherits from $R$, and has the left and right given by $r\cdot q=\alpha(r)q$ and $q\cdot r=qr$ for $r\in R$ and $q\in Q$; and $\psi:P\otimes Q\to R$ is the $R$-bimodule homomorphism given by $p\otimes q\mapsto pq$, then $(P,Q,\psi)$ is an $R$-system. Recall also that $R$ is a uniquely maximal, faithful, $\psi$-compatible ideal and that if $\alpha$ is an automorphism, then $\mathcal{O}_{(P,Q,\psi)}(R)$ is isomorphic to the crossed product $R\times_\alpha\Z$ of $R$ by $\alpha$. If $R$ is unital, and we let $e=\alpha(1)$ (where 1 denotes the unit of $R$), then $e$ is an idempotent and $\alpha(R)=\alpha(R)R\alpha(R)=eRe$. It follows from \cite[Example 5.7]{CaOr2011} that we in this case have that $\mathcal{O}_{(P,Q,\psi)}(R)$ is isomorphic to the fractional skew monoid ring $R[t_+,t_-;\alpha]$ that Ara, Gonz\'alez-Barroso, Goodearl and Pardo have constructed in \cite{ArGoGo2004}. We will use these facts together with Theorem \ref{thm:simple} to give a characterization of when the crossed product $R\times_\alpha\Z$ is simple and when the fractional skew monoid ring $R[t_+,t_-;\alpha]$ is simple, but first we introduce some notions and results that we will use for this.

Unless otherwise stated, $\alpha$ will just be assumed to be an injective ring homomorphism such that $\alpha(R)R\alpha(R)\subseteq\alpha(R)$. We let $(P,Q,\psi)$ be the $R$-system defined above. Using that $R$ has local units, it is not difficult to see that for $n\in\N$, the $R$-bimodule $P^{\otimes n}$ is isomorphic to the $R$-bimodule which is equal to $\spn\{r_1\alpha^n(r_2)\mid r_1,r_2,\in R\}$ as a set, has the additive structure it inherits from $R$, and has the left and right actions given by $r\cdot p=rp$ and $p\cdot r=p\alpha^n(r)$, respectively. Likewise, $Q^{\otimes n}$ is isomorphic to the $R$-bimodule which is equal to $\spn\{\alpha^n(r_1)r_2\mid r_1,r_2\in R\}$ as a set, has the additive structure it inherits from $R$ and has the left and right given by $r\cdot q=\alpha^n(r)q$ and $q\cdot r=qr$, respectively. We will simply identify $P^{\otimes n}$ and $Q^{\otimes n}$ with these two $R$-bimodules. We will use a $\cdot$ to indicate the left and right actions of $R$ on $P^{\otimes n}$ and $Q^{\otimes n}$ to distinguish these actions from the ordinary multiplication in $R$. It is straightforward to check that if $q\in Q$, $q_n\in Q^{\otimes n}$ and $p\in P$, then $S_pT_{q_n}(q)=\alpha^n(p)\alpha(q_n)q$.
Let $(S,T,\sigma,\mathcal{O}_{(P,Q,\psi)}(R))$ denote the Cuntz-Pimsner representation of $(P,Q,\psi)$ relative to $R$. Then $S^n(p_n)\sigma(r) = S^n(p_n\alpha^n(r))$, $\sigma(r)S^n(p_n) = S^n(rp_n)$, $S^n(p_n)S^{n'}(p'_{n'})=S^{n+n'}(p_n\alpha^n(p'_{n'}))$, $T^n(q_n)\sigma(r)=T^n(q_nr)$, $\sigma(r)T^n(q_n) = T^n(\alpha^n(r)q_n)$, $T^n(q_n)T^{n'}(q'_{n'})=T^{n+n'}(\alpha^{n'}(q_n)q'_{n'})$, $S^n(p)T^n(q)=\sigma(pq)$ and $T^n(q)S^n(p)=\sigma(\alpha^{-n}(q_np_n))$ for $n,n'\in\N$, $p_n\in P^n$, $r\in R$, $n'\in P^{\otimes n'}$, $q_n\in Q^{\otimes n}$ and $q_{n'}\in Q^{\otimes n'}$ where $p_n$, $p_{n'}$, $q_n$ and $q_{n'}$ are considered as elements of $R$ and the multiplication of $R$ is used. It follows that $\mathcal{O}_{(P,Q,\psi)}(R)^{(0)}=\sigma(R)$, and that $\mathcal{O}_{(P,Q,\psi)}(R)^{(n)}=T^n(Q^{\otimes n})$ and $\mathcal{O}_{(P,Q,\psi)}(R)^{(-n)}=S^n(P^{\otimes n})$ for $n\in\N$. We say that an ideal $I$ of $R$ is strongly $\alpha$-invariant if $\alpha(I)\subseteq I$ and $\alpha(R)I\alpha(R)\subseteq \alpha(I)$ (this is equivalent to $\alpha(R)I\alpha(R)= \alpha(I)$ since $R$ has local units).

\begin{prop}
	Let $R$ be a ring with local units, $\alpha:R\to R$ an injective ring homomorphism satisfying $\alpha(R)R\alpha(R)\subseteq\alpha(R)$, and let $(P,Q,\psi)$ be the $R$-system defined above. Then there is a bijective correspondence between graded ideals of $\mathcal{O}_{(P,Q,\psi)}(R)$ and strongly $\alpha$-invariant ideals of $R$. 
\end{prop}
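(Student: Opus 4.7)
The plan is to reduce the proposition to the general classification of graded ideals in terms of $T$-pairs, and then to translate the $T$-pair condition into the language of $\alpha$-invariant ideals. By \cite[Theorem~7.27]{CaOr2011}, the graded ideals of $\mathcal{O}_{(P,Q,\psi)}(R)$ are in bijective correspondence with the $T$-pairs $(I,J')$ of $(P,Q,\psi)$ satisfying $R\subseteq J'$. Since, as recalled in the paragraph preceding the proposition, $R$ is the uniquely maximal faithful $\psi$-compatible ideal, the inclusion $R\subseteq J'$ forces $J'=R$. Consequently, the graded ideals of $\mathcal{O}_{(P,Q,\psi)}(R)$ are parametrised by those $\psi$-invariant ideals $I$ of $R$ for which $(I,R)$ is a $T$-pair, and the proposition reduces to identifying this set with the strongly $\alpha$-invariant ideals of $R$.

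I would then unwind the $T$-pair condition using the explicit bimodule structure. Because the right $R$-action on $P$ and the left $R$-action on $Q$ are both given through $\alpha$, one has $PI=P\alpha(I)$ and $IQ=\alpha(I)Q$, whereas $IP$ and $QI$ are just ordinary products inside $R$. The $\psi$-invariance of $I$, which amounts to asserting that ${}_IP=P/IP$ and $Q_I=Q/QI$ are well-defined $R/I$-bimodules carrying an induced pairing $\psi_I$, should translate into the standard invariance $\alpha(I)\subseteq I$, since the descent of the right $R$-action on $P$ and of the left $R$-action on $Q$ requires precisely $P\alpha(I)\subseteq IP$ and $\alpha(I)Q\subseteq QI$. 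The additional requirement that $R/I$ be a faithful $\psi_I$-compatible ideal of $R_I$ in the quotient system $({}_IP,Q_I,\psi_I)$ should, after chasing the definition of $\psi_I$-compatibility, yield the refined condition $\alpha(R)I\alpha(R)\subseteq\alpha(I)$.

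For the converse direction, given a strongly $\alpha$-invariant ideal $I$ of $R$ I would verify directly that $I$ is $\psi$-invariant and that $(I,R)$ is a $T$-pair: the inclusion $\alpha(I)\subseteq I$ supplies the $\psi$-invariance, while $\alpha(R)I\alpha(R)\subseteq\alpha(I)$ provides exactly the compactness-type information needed for $R/I$ to be a faithful $\psi_I$-compatible ideal in the quotient system. Tracking the correspondence of \cite[Theorem~7.27]{CaOr2011}, the graded ideal attached to the $T$-pair $(I,R)$ will turn out to be the ideal of $\mathcal{O}_{(P,Q,\psi)}(R)$ generated by $\sigma(I)$, giving an explicit description of the bijection.

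The main obstacle is the second step of the translation. The $\psi$-invariance calculation is straightforward once the explicit bimodule formulas are in place, but distinguishing the ordinary $\alpha$-invariance $\alpha(I)\subseteq I$ from its strong version $\alpha(R)I\alpha(R)\subseteq\alpha(I)$ requires a careful analysis of how $\psi_I$-compatibility in the quotient system interacts with the corner $\alpha(R)\subseteq R$. This is the point where the local-unit hypothesis on $R$ and the specific form $P=\spn\{r_1\alpha(r_2)\}$, $Q=\spn\{\alpha(r_1)r_2\}$ both play an essential role.
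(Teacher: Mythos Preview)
Your approach is genuinely different from the paper's. The paper does not invoke \cite[Theorem~7.27]{CaOr2011} at all; instead it constructs the bijection by hand. For a strongly $\alpha$-invariant ideal $I$ it sets $H_I$ to be the ideal generated by $\sigma(I)$ and, using the explicit description $\mathcal{O}_{(P,Q,\psi)}(R)^{(0)}=\sigma(R)$, $\mathcal{O}_{(P,Q,\psi)}(R)^{(n)}=T^n(Q^{\otimes n})$, $\mathcal{O}_{(P,Q,\psi)}(R)^{(-n)}=S^n(P^{\otimes n})$, computes $H_I$ as the graded ideal $\bigoplus_n H^{(n)}$ with $H^{(0)}=\sigma(I)$, $H^{(n)}=\spn\{T^n(\alpha^n(r)x)\}$, $H^{(-n)}=\spn\{S^n(x\alpha^n(r))\}$. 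Conversely, for a graded ideal $H$ it sets $I_H=\sigma^{-1}(H\cap\sigma(R))$ and checks strong $\alpha$-invariance by direct calculation with local units: $\sigma(\alpha(x))=S(e_1\alpha(e_2))\sigma(x)T(\alpha(e_2)e_1)$ and $\sigma(\alpha^{-1}(\alpha(r_1)x\alpha(r_2)))=T(\alpha(r_1)f_1)\sigma(x)S(f_2\alpha(r_2))$. That the two maps are mutually inverse then follows immediately from the explicit form of $H_I$ and \cite[Lemma~3.35]{CaOr2011}.

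Your route via $T$-pairs is conceptually cleaner and would yield the result as a specialisation of the general classification, but as written it is only a sketch: the heart of the matter, namely the equivalence between ``$(I,R)$ is a $T$-pair'' and ``$I$ is strongly $\alpha$-invariant'', is asserted (``should translate into'', ``should \ldots\ yield'') rather than proved. Carrying this out requires unpacking the definitions of $\psi$-invariance and $\psi_I$-compatibility from \cite{CaOr2011} in the present example, which is precisely the work you flag as the main obstacle. The paper's direct argument sidesteps this entirely and is both shorter and more self-contained; your approach would, if completed, have the advantage of explaining \emph{why} the strong $\alpha$-invariance condition is the right one, by exhibiting it as the shadow of the abstract $T$-pair condition. (A minor point: the implication $R\subseteq J'\Rightarrow J'=R$ is trivial since $J'$ is an ideal of $R$; the unique-maximality of $R$ is not needed there.)
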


\begin{proof}	
	For each strongly $\alpha$-invariant ideal $I$ in $R$, let $H_I$ be the ideal in $\mathcal{O}_{(P,Q,\psi)}(R)$ generated by $\sigma(I)$; and let for each graded ideal $H$ in $\mathcal{O}_{(P,Q,\psi)}(R)$, $I_H=\{x\in R\mid \sigma(x)\in H\}$. We will show that $H_I$ is a graded ideal in $\mathcal{O}_{(P,Q,\psi)}(R)$, that $I_H$ is a strongly $\alpha$-invariant ideal in $R$, and that $I_{H_I}=I$ and $H_{I_H}=H$ for all strongly $\alpha$-invariant ideals $I$ in $R$ and all graded ideals $H$ in $\mathcal{O}_{(P,Q,\psi)}(R)$. This will establish the bijective correspondence between the graded ideals of $\mathcal{O}_{(P,Q,\psi)}(R)$ and the strongly $\alpha$-invariant ideals of $R$. 
	
	Let $I$ be a strongly $\alpha$-invariant ideal in $R$. It is not difficult to check that if we let $H^{(0)}=\sigma(I)$ and for each $n\in\N$ let $H^{(n)}=\spn\{T^n(\alpha^n(r)x)\mid r\in R,\ x\in I\}$ and $H^{(-n)}=\spn\{S^n(x\alpha^n(r))\mid x\in I,\ r\in R\}$, then $\oplus_{n\in\Z}H^{(n)}$ is an ideal in $\mathcal{O}_{(P,Q,\psi)}(R)$. Since $\oplus_{n\in\Z}H^{(n)}$ contains $\sigma(I)$ and itself must be contained in any ideal which contains $\sigma(I)$, it must be the case that $H_I=\oplus_{n\in\Z}H^{(n)}$. It follows that $H_I$ is graded and that $I_{H_I}=I$.
	
	Let $H$ be a graded ideal in $\mathcal{O}_{(P,Q,\psi)}(R)$. It is clear that $I_H$ is an ideal in $R$. Assume that $x\in I_H$. Choose idempotens $e_1,e_2\in R$ such that $e_1\alpha(x)e_1=\alpha(x)$ and $e_2xe_2=x$. Then 
	\begin{equation*}
		\sigma(\alpha(x))=S(e_1\alpha(e_2))\sigma(x)T(\alpha(e_2)e_1)\in H, 
	\end{equation*}
	so $\alpha(x)\in I_H$. Assume then that $r_1,r_2\in R$. Choose idempotents $f_1,f_2\in R$ such that $f\alpha(r_1)f_1=\alpha(r_1)$ and $f_2\alpha(r_2)=\alpha(r_2)$. Then 
	\begin{equation*}
		\sigma(\alpha^{-1}(\alpha(r_1)x\alpha(r_2)))= T(\alpha(r_1)f_1)\sigma(x)S(f_2\alpha(r_2))\in H, 
	\end{equation*}
	so $\alpha(r_1)x\alpha(r_2)\in \alpha(I_H)$. This shows that $I_H$ is a strongly $\alpha$-invariant ideal in $R$. Since $\mathcal{O}_{(P,Q,\psi)}(R)^{(0)}=\sigma(R)$, it follows from \cite[Lemma 3.35]{CaOr2011} that $H$ is generated by $\sigma(I_H)$. Thus $H=H_{I_H}$.
\end{proof}

By combining the above result with Remark \ref{remark:supermax} we get the following characterization of when $R$ is a super maximal $\psi$-compatible ideal.

\begin{cor}\label{cor:supermax}
	Let $R$ be a ring with local units, $\alpha:R\to R$ an injective ring homomorphism satisfying $\alpha(R)R\alpha(R)\subseteq\alpha(R)$, and let $(P,Q,\psi)$ be the $R$-system defined above. Then the following three conditions are equivalent:
	\begin{enumerate}
		\item The ring $R$ is a super maximal $\psi$-compatible ideal.
		\item The only graded ideals in $\mathcal{O}_{(P,Q,\psi)}(R)$ are $\{0\}$ and $\mathcal{O}_{(P,Q,\psi)}(R)$.
		\item The only strongly $\alpha$-invariant ideals in $R$ are $\{0\}$ and $R$.
	\end{enumerate}
\end{cor}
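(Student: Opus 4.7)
The plan is to string together Remark \ref{remark:supermax} with the bijective correspondence of the preceding proposition. In effect, (1)$\Leftrightarrow$(2) is essentially the remark itself (applied with $J=R$), and (2)$\Leftrightarrow$(3) reduces to verifying that this bijection matches the two trivial ideals on one side with the two trivial ideals on the other.

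For (1)$\Leftrightarrow$(2), I would simply invoke Remark \ref{remark:supermax}, taking $J=R$. As recalled in the discussion preceding the previous proposition, $R$ is a uniquely maximal, faithful, $\psi$-compatible ideal, so the remark applies verbatim and yields the equivalence.

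For (2)$\Leftrightarrow$(3), I would use the bijection $I\mapsto H_I$, $H\mapsto I_H$ from the preceding proposition. It suffices to check that the trivial endpoints correspond. On the one hand, $H_{\{0\}}$ is the ideal generated by $\sigma(\{0\})=\{0\}$, so $H_{\{0\}}=\{0\}$. On the other hand, reading off the explicit description of $H_I$ given in the proof of the previous proposition with $I=R$ yields $H_R^{(0)}=\sigma(R)$ and, for $n\in\N$,
\begin{equation*}
	H_R^{(n)}=\spn\{T^n(\alpha^n(r)r')\mid r,r'\in R\}=T^n(Q^{\otimes n})=\mathcal{O}_{(P,Q,\psi)}(R)^{(n)},
\end{equation*}
and analogously $H_R^{(-n)}=S^n(P^{\otimes n})=\mathcal{O}_{(P,Q,\psi)}(R)^{(-n)}$. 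Thus $H_R=\mathcal{O}_{(P,Q,\psi)}(R)$, so the bijection does indeed send $\{0\}\leftrightarrow\{0\}$ and $R\leftrightarrow\mathcal{O}_{(P,Q,\psi)}(R)$, and (2)$\Leftrightarrow$(3) follows at once.

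There is no real obstacle here, since both halves are immediate consequences of already-established tools. The only minor bookkeeping is the identification of the graded components of $H_R$, which relies on the descriptions $Q^{\otimes n}=\spn\{\alpha^n(r_1)r_2\mid r_1,r_2\in R\}$ and the analogous one for $P^{\otimes n}$ recalled in the paragraph preceding the previous proposition; nothing further is needed.
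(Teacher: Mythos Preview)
Your proposal is correct and follows the same approach as the paper, which simply states that the corollary is obtained ``by combining the above result with Remark \ref{remark:supermax}''. Your added verification that the trivial ideals correspond under the bijection is a reasonable bit of bookkeeping that the paper leaves implicit (since a bijection between the two lattices, together with the observation that $\{0\}$ and $R$ are always strongly $\alpha$-invariant and $\{0\}$ and $\mathcal{O}_{(P,Q,\psi)}(R)$ are always graded, already forces the conclusion).
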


We next introduce the \emph{multiplier ring} of $R$ (see for example \cite{ArPe2000}). A double centralizer on $R$ is a pair $(f,g)$ where $f:R\to R$ is a right $R$-module homomorphism and $g:R\to R$ is a left $R$-module homomorphism satisfying $r_1f(r_2)=g(r_1)r_2$ for all $r_1,r_2\in R$. The \emph{multiplier ring} of $R$ is the ring $\mathcal{M}(R)$ of all double centralizers on $R$ with addition defined by $(f_1,g_1)+(f_2,g_2)=(f_1+f_2,g_1+g_2)$ and product defined by $(f_1,g_1)(f_2,g_2)=(f_1\circ f_2,g_2\circ g_1)$. Notice that $(\id_R,\id_R)$ is a unit of $\mathcal{M}(R)$. There is a ring homomorphism $\iota:R\to\mathcal{M}(R)$ given by $\iota(r)=(f_r,g_r)$ where $f_r(s)=rs$ and $g_r(s)=sr$ for $r,s\in R$. Since $R$ has local units, $\iota$ is injective. We will therefore simple regard $R$ as a subring of $\mathcal{M}(R)$. We then have that if $u=(f,g)\in\mathcal{M}(R)$ and $r\in R$, then $ur=f(r)$ and $ru=g(r)$. It follows that $R$ is an ideal in $\mathcal{M}(R)$. Notice that $R=\mathcal{M}(R)$ if and only if $R$ is unital.

\begin{definition}\label{def:inner}
	Let $n\in\N$ and let $R$ be a ring with local units. A ring homomorphism $\alpha:R\to R$ is said to be \emph{inner with periodicity $n$} if there exist $u,v\in\mathcal{M}(R)$ such that $vu=1$ (where $1$ denotes the unit of $\mathcal{M}(R)$), and $\alpha^n(r)=urv$ and $\alpha(ur)=u\alpha(r)$ for all $r\in R$. If $\alpha$ is not inner of any periodicity, then it is said to be \emph{outer}.
\end{definition}

\begin{remark}\label{remark:inner}
	Notice that if $\alpha$ is an automorphism and $u,v$ are as above, then $v$ is the inverse of $u$.
\end{remark}

In \cite{ArPe2000} the authors introduce a topology on $\mathcal{M}(R)$ in the following way. A net $(x_\lambda)_{\lambda\in \Lambda}$ of elements of $\mathcal{M}(R)$ converges \textit{strictly} to an a element $x\in \mathcal{M}(R)$ if there for every $r\in R$ exists $\lambda_0\in \Lambda$ such that $(x_\lambda-x)r=r(x_\lambda-x)=0$ for $\lambda\geq \lambda_0$. Since $R$ has local units, a net in $\mathcal{M}(R)$ can at most converges strictly to one element. Such an element will, if it exists, be called the \emph{strict limit} of the net. A net $(x_\lambda)_{\lambda\in \Lambda}$ is \emph{Cauchy} if there for every $r\in R$ exists $\lambda_0\in \Lambda$ such that $r(x_\lambda-x_\mu)=(x_\lambda-x_\mu)r=0$ for $\lambda,\mu\geq \lambda_0$. 
It is shown in \cite[Proposition 1.6]{ArPe2000} that if $R$ has local units, then every Cauchy net in $\mathcal{M}(R)$ converges strictly, and that every element of $\mathcal{M}(R)$ is the strict limit of a net of elements of $R$.

A net $(r_\lambda)_{\lambda\in \Lambda}$ of elements of $R$ that converges to the unit of $\mathcal{M}(R)$ is called an \emph{approximate unit} for $R$. Notice that in case $R$ has local units we can construct an approximate unit $(e_\lambda)_{\lambda\in \Lambda}$ consisting of idempotents simple by letting $\Lambda$ be the directed set of finite subsets of $R$ ordered by inclusion, and then for every $\lambda\in\Lambda$ choosing an idempotent $e_\lambda$ such that $e_\lambda r=re_\lambda=r$ for every $r\in \lambda$. 

\begin{definition}\label{def:strict}
Let $R$ be a ring with local units. A ring homomorphism $\alpha:R\to R$ is said to be \emph{strict} if there exists an approximate unit $(e_\lambda)_{\lambda\in\Lambda}$ for $R$ consisting of idempotents such that $(\alpha(e_\lambda))_{\lambda\in\Lambda}$ converges strictly.
\end{definition}

\begin{remark}\label{remark:strict}
	Notice that if $\alpha$ is an automorphism, then it is strict (since $(\alpha(e_\lambda))_{\lambda\in\Lambda}$ converges strictly to the unit in that case). Notice also that if $R$ is unital, then every ring homomorphism $\alpha:R\to R$ is automatically strict (because the net consisting of just $1$ is an approximate unit in that case). 
\end{remark}

\begin{prop} \label{prop:inner}
	Let $R$ be a ring with local units, $\alpha:R\to R$ an injective ring homomorphism satisfying $\alpha(R)R\alpha(R)\subseteq\alpha(R)$, and let $(P,Q,\psi)$ be the $R$-system defined above. Consider the following three conditions:
	\begin{enumerate}
		\item \label{item:221}  There exists an $n\in\N$ such that the homomorphism $\alpha$ is inner with periodicity $n$.
		\item \label{item:222}  The ring $R$ is a $\psi$-invariant cycle.
		\item\label{item:223} The ring $R$ does not satisfy condition (L) with respect to $(P,Q,\psi)$.
	\end{enumerate}
	Then \eqref{item:221} implies \eqref{item:222}, and \eqref{item:222} implies \eqref{item:223}. If in addition $R$ is a super maximal $\psi$-compatible ideal, and $\alpha^n$ is strict for every $n\in\N$, then \eqref{item:223} implies \eqref{item:221} and the three conditions are equivalent.
\end{prop}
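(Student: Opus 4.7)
My plan is to dispatch the direct implications $(1) \Rightarrow (2) \Rightarrow (3)$ with short computations, and to concentrate the technical work on $(3) \Rightarrow (1)$ under the strictness and super maximality hypotheses.

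For \eqref{item:221} $\Rightarrow$ \eqref{item:222}, I would define $\eta \colon R \to Q^{\otimes n}$ by $\eta(x) = ux$, where $u, v \in \mathcal{M}(R)$ witness innerness with periodicity $n$. Combining $vu = 1$ with $\alpha^n(r) = urv$ first gives $ur = \alpha^n(r) u$, so $\eta(x) \in \alpha^n(R)R = Q^{\otimes n}$ after absorbing $u$ into a local unit on the right. Injectivity is clear from $v\eta(x) = vux = x$, the $R$-bimodule property is direct, and the cycle identity reduces, using $\alpha(ux) = u\alpha(x)$, to
\[
S_pT_{\eta(x)}(q) = \alpha^n(p)\alpha(ux)q = \alpha^n(p)u\alpha(x)q = up\alpha(x)q = \eta\bigl(\psi(p\cdot x \otimes q)\bigr).
\]
Since $J = R$ forces $J^{[\infty]} = R$, the containment $R \subseteq J^{[\infty]}$ is automatic, so $R$ is a $\psi$-invariant cycle. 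The implication \eqref{item:222} $\Rightarrow$ \eqref{item:223} is then immediate from Definition \ref{def:L}.

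For \eqref{item:223} $\Rightarrow$ \eqref{item:221} I would start from a non-zero $\psi$-invariant ideal $I \subseteq R$ with an injective $R$-bimodule homomorphism $\eta \colon I \to Q^{\otimes m}$ satisfying the cycle identity. The first step is to invoke super maximality: any $T$-pair $(I, J')$ with $J = R \subseteq J'$ has $J' = R$, and Remark \ref{remark:supermax} together with the proposition preceding Corollary \ref{cor:supermax} leaves only $(0, R)$ and $(R, R)$, so $I = R$ and one has $\eta \colon R \to Q^{\otimes m}$. Next, I would realise $\eta$ as left multiplication by a multiplier $u \in \mathcal{M}(R)$. Choose an approximate unit $(e_\lambda)$ of idempotents for which $(\alpha^k(e_\lambda))_\lambda$ converges strictly for each $k \le m$ (possible by passing to a common refinement of the approximate units witnessing strictness of each $\alpha^k$), declare $u \cdot r := \eta(r)$, and define $s \cdot u$ to be the eventually constant value of $(s\eta(e_\lambda))_\lambda$. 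The key verification is that this net stabilises: the bimodule identity $\eta(r)s = \alpha^m(r)\eta(s)$ yields $\alpha^m(e_\lambda - e_\mu)\eta(e_\lambda - e_\mu) = -\eta(e_\lambda - e_\mu)$, from which
\[
s\eta(e_\lambda - e_\mu) = -s\alpha^m(e_\lambda - e_\mu)\eta(e_\lambda - e_\mu) = 0
\]
once $\lambda, \mu$ are large enough that $s\alpha^m(e_\nu)$ has stabilised by strictness of $\alpha^m$. This produces $u \in \mathcal{M}(R)$ with $ur = \eta(r)$, and the bimodule identity immediately gives $ur = \alpha^m(r)u$.

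The remaining and hardest step is to construct $v \in \mathcal{M}(R)$ with $vu = 1$, $urv = \alpha^m(r)$, and $\alpha(ur) = u\alpha(r)$. I would invoke super maximality once more to force that the ideal of $R$ generated by $\eta(R)$ is all of $R$ (arguing through the graded ideal it produces in $\OCK$ via Remark \ref{remark:supermax} and the proposition preceding Corollary \ref{cor:supermax}); the left inverse $\eta(r) \mapsto r$ then extends uniquely to a double centralizer $v \in \mathcal{M}(R)$ through the cycle structure. The identities $vu = 1$ and $urv = \alpha^m(r)$ follow by construction together with $ur = \alpha^m(r)u$, and $\alpha(ur) = u\alpha(r)$ is extracted from the cycle identity $\alpha^m(p)\alpha(\eta(r))q = \eta(p\alpha(r)q)$ by testing against approximate units and invoking strictness of $\alpha$. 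The principal obstacle throughout is synchronising strict convergence of the various nets needed to define $u$ and $v$ as genuine multipliers; this is precisely what the hypothesis of strictness of every $\alpha^k$ is designed to provide.
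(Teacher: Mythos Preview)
Your treatment of $\eqref{item:221}\Rightarrow\eqref{item:222}$ and $\eqref{item:222}\Rightarrow\eqref{item:223}$ matches the paper's and is fine.

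For $\eqref{item:223}\Rightarrow\eqref{item:221}$ you take a different route from the paper, and there is a genuine gap. You start from the invariant cycle $\eta:I_0\to Q^{\otimes m}$, push $I_0$ up to $R$ via super maximality, and then try to manufacture both $u$ and $v$ directly out of $\eta$. The construction of $u$ as the strict limit of $\eta(e_\lambda)$ is reasonable, and the relation $ur=\alpha^m(r)u$ as well as $\alpha(ur)=u\alpha(r)$ do fall out of the bimodule and cycle identities. The problem is $v$. Saying that the left inverse $\eta(r)\mapsto r$ ``extends uniquely to a double centralizer $v\in\mathcal{M}(R)$ through the cycle structure'' is not a construction: this map is only defined on $\eta(R)=uR$, and knowing that the two-sided ideal generated by $uR$ is all of $R$ does not let you define $v\cdot s$ for arbitrary $s\in R$, because writing $s=\sum r_i u s_i$ still leaves $vr_i$ undetermined. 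Injectivity of $\eta$ gives you that $u$ has no right annihilators in $R$, which is far weaker than left invertibility in $\mathcal{M}(R)$. (As a side issue, your appeal to super maximality to get $I_0=R$ also needs a check that $I_0$ is strongly $\alpha$-invariant, not merely $\psi$-invariant; you have not verified this.)

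The paper avoids this difficulty by not working with $\eta$ at all in this direction. Instead it invokes Proposition~\ref{prop:main} to pass from the failure of condition (L) to condition~(2) there: a non-zero graded ideal $\bigoplus_k H^{(k)}$ together with a full family of injective $\FCK$-bimodule maps $\phi_k:H^{(k)}\to\mathcal{O}_{(P,Q,\psi)}(R)^{(k+n)}$. Super maximality now forces \emph{both} $\bigoplus_k H^{(k)}$ and $\bigoplus_k\phi_{k-n}(H^{(k-n)})$ to equal the whole algebra, so in particular $\phi_{-n}:S^n(P^{\otimes n})\to\sigma(R)$ is a bijection. The element $v$ is then obtained as the strict limit of the $v_\lambda$ determined by $\phi_{-n}(S^n(v_\lambda))=\sigma(e_\lambda)$, in parallel with $u$ coming from $T^n(u_\lambda)=\phi_0(\sigma(e_\lambda))$. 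The extra map $\phi_{-n}$ is exactly the missing piece you would need; it does not come from $\eta$ alone, but from the stronger structure provided by Proposition~\ref{prop:main}.
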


\begin{proof}
	$\eqref{item:221}\implies\eqref{item:222}$:
	Let $u$ and $v$ be elements in $\mathcal{M}(R)$ such that $vu=1$, and $urv=\alpha^n(r)$ and $\alpha(ux)=u\alpha(x)$ for all $r\in R$. Define $\eta:R\to R$ by $\eta(r)=ur$. Let $r\in R$. Choose $e\in R$ such that $er=r$. Then we have that $\eta(r)=ur=uer=uevur=\alpha^n(e)ur$. This shows that $\eta(R)\subseteq Q^{\otimes n}$. It is clear that $\eta$ is additive and injective. Let $r_1,r_2,\in R$. Then $\eta(r_1r_2)=ur_1r_2=\eta(r_1)r_2$ and $\eta(r_1r_2)=ur_1r_2=\alpha^n(r_1)ur_2=\alpha^n(r_1)\eta(r_2)$, which shows that $\eta$ is an $R$-bimodule homomorphism from $R$ to $Q^{\otimes n}$. Let $p\in P$, $r\in R$ and $q\in Q$. Then we have that
	\begin{equation*}
		\begin{split}
			\eta\left(\psi(p\cdot r\otimes q)\right)
			&=\eta\left(p\alpha(r)q\right)=up\alpha(r)q=\alpha^n(p)u\alpha(r)q\\
			&=\alpha^n(p)\alpha(ur)q=\alpha^n(p)\alpha(\eta(r))q=S_pT_{\eta(r)}(q).
		\end{split}
	\end{equation*}
	 Thus $R$ is a $\psi$-invariant cycle.
	
	$\eqref{item:222}\implies\eqref{item:223}$:
	It is easy to see that $\psi^{-1}(R)=R$ from which it follows that $R^{[\infty]}=R$. Thus, if $R$ is a $\psi$-invariant cycle, then $R$ does not satisfy condition (L) with respect to $(P,Q,\psi)$.
	
	$\eqref{item:223}\implies\eqref{item:221}$: 
	Assume that $R$ does not satisfy condition (L) with respect to $(P,Q,\psi)$. It then follows from Proposition \ref{prop:main} that there is a non-zero graded ideal $\bigoplus_{k\in\Z}H^{(k)}$ in $\mathcal{O}_{(P,Q,\psi)}(R)$, an $n\in\N$ and a family $(\phi_k)_{k\in\Z}$ of injective $\mathcal{O}_{(P,Q,\psi)}(R)^{(0)}$-bimodule homomorphisms $\phi_k:H^{(k)}\to\mathcal{O}_{(P,Q,\psi)}(R)^{(k+n)}$ such that $x\phi_k(y)=\phi_{k+j}(xy)$ and $\phi_k(y)x=\phi_{k+j}(yx)$ for $k,j\in\Z$, $x\in\mathcal{O}_{(P,Q,\psi)}(R)^{(j)}$ and $y\in H^{(k)}$. Notice that also $\bigoplus_{k\in\Z}\phi_{k-n}(H^{(k-n)})$ is a non-zero graded ideal in $\mathcal{O}_{(P,Q,\psi)}(R)$. If $R$ is a super maximal $\psi$-compatible ideal, then it follows from Corollary \ref{cor:supermax} that $\bigoplus_{k\in\Z}H^{(k)}=\bigoplus_{k\in\Z}\phi_{k-n}(H^{(k-n)})=\mathcal{O}_{(P,Q,\psi)}(R)$ from which it follows that $H^{(0)}=\phi_{-n}(H^{(-n)})=\mathcal{O}_{(P,Q,\psi)}(R)^{(0)}=\sigma(R)$, $\phi_0(H^{(0)})=\mathcal{O}_{(P,Q,\psi)}(R)^{(n)}=T^n(Q^{\otimes n})$ and $H^{(-n)}=\mathcal{O}_{(P,Q,\psi)}(R)^{(-n)}=S^n(P^{\otimes n})$. Suppose in addition that $\alpha^n$ is strict, and let $(e_\lambda)_{\lambda\in\Lambda}$ be an approximate unit for $R$ consisting of idempotents such that $(\alpha(e_\lambda))_{\lambda\in\Lambda}$ converges strictly. Since $T^n$ and $\phi_{-n}$ are injective, and $Q^{\otimes n}$ and $P^{\otimes n}$ are subsets of $R$, there exists for each $\lambda\in\Lambda$ a unique $u_\lambda\in R$ such that $T^n(u_\lambda)=\phi_0(\sigma(e_\lambda))$ and a unique $v_\lambda\in R$ such that $\phi_{-n}(S^n(v_\lambda))=\sigma(e_\lambda)$. Notice that
	\begin{equation*}
		T^n(u_\lambda)=\phi_0(\sigma(e_\lambda))
		=\phi_0(\sigma(e_\lambda e_\lambda))
		=\sigma(e_\lambda)\phi_0(\sigma(e_\lambda))
		=\sigma(e_\lambda)T^n(u_\lambda)=T^n(\alpha^n(e_\lambda)u_\lambda).
	\end{equation*}
	It follows that $\alpha^n(e_\lambda)u_\lambda=u_\lambda$. If $\lambda,\lambda_1\in\Lambda$ and $e_{\lambda_1}e_\lambda=e_{\lambda_1}$, then
	\begin{equation*}
		\begin{split}
		T^n(\alpha^n(e_{\lambda_1})u_\lambda)
		&=\sigma(e_{\lambda_1})T^n(u_\lambda)
		=\sigma(e_{\lambda_1})\phi_0(\sigma(e_\lambda))\\
		&=\phi_0(\sigma(e_{\lambda_1}e_\lambda))
		=\phi_0(\sigma(e_{\lambda_1}))
		=T^n(u_{\lambda_1}),
		\end{split}
	\end{equation*}
	from which it follows that $\alpha^n(e_{\lambda_1})u_\lambda=u_{\lambda_1}$. Let $r\in R$. Choose $\lambda_1,\lambda_2,\lambda_3\in\Lambda$ such that $r\alpha^n(e_\lambda)=r\alpha^n(e_{\lambda_1})$ for $\lambda\ge\lambda_1$, $e_{\lambda_1}e_\lambda=e_{\lambda_1}$ for $\lambda\ge\lambda_2$, and $e_\lambda r=r$ for $\lambda\ge \lambda_3$. If $\lambda\ge \lambda_1,\lambda_2,\lambda_3$, then 
	\begin{equation*}
		ru_\lambda=r\alpha^n(e_\lambda)u_\lambda=r\alpha^n(e_{\lambda_1})u_\lambda
		=ru_{\lambda_1},
	\end{equation*}
	and
	\begin{equation*}
		T^n(u_\lambda r)
		=T^n(u_\lambda)\sigma(r)=\phi_0(\sigma(e_\lambda))\sigma(r)
		=\phi_0(\sigma(e_\lambda r))=\phi_0(\sigma(r)).
	\end{equation*}
	This shows that $(u_\lambda)_{\lambda\in\Lambda}$ is Cauchy and hence converges strictly to an element $u\in\mathcal{M}(R)$. One can by a similar method show that $(v_\lambda)_{\lambda\in\Lambda}$ converges strictly to an element $v\in\mathcal{M}(R)$.
	
	Let $\lambda\in\Lambda$. Then 
	\begin{equation*}
		\begin{split}
		\sigma(v_\lambda u_\lambda)&=S^n(v_\lambda)T^n(u_\lambda)
		=S^n(v_\lambda)\phi_0(\sigma(e_\lambda))
		=\phi_{-n}(S^n(v_\lambda)\sigma(e_\lambda))\\
		&=\phi_{-n}(S^n(v_\lambda))\sigma(e_\lambda)
		=\sigma(e_\lambda)\sigma(e_\lambda)=\sigma(e_\lambda),
	\end{split}
	\end{equation*}
	from which it follow that $v_\lambda u_\lambda=e_\lambda$. Thus $vu=1$.
	
	Let $r\in R$. Choose $\lambda_0\in\Lambda$ such that $re_\lambda=e_\lambda r=r$ for $\lambda\ge\lambda_0$. If $\lambda\ge\lambda_0$, then 
	\begin{equation*}
		T^n(\alpha^n(r)u_\lambda)=\sigma(r)\phi_0(\sigma(e_\lambda))
		=\phi_0(\sigma(re_\lambda))=\phi_0(\sigma(e_\lambda r))
		=\phi_0(\sigma(e_\lambda))\sigma(r)=T^n(u_\lambda r).
	\end{equation*}
	It follows that $\alpha^n(r)u=ur$ and thus that $urv=\alpha^n(r)$.
	
	Let $r\in R$. Choose $\lambda_0\in\Lambda$ such that $e_\lambda r=r$ and $e_\lambda\alpha(r)=\alpha(r)$ for $\lambda\ge\lambda_0$. If $\lambda\ge\lambda_0$ then
	\begin{equation*}
		\begin{split}
		T^n(\alpha(u_\lambda r))
		&=T^n\bigl(\alpha^{n+1}(e_\lambda)\alpha(u_\lambda)\alpha(r)\bigr)
		=S(\alpha(e_\lambda))T^n(u_\lambda) T(\alpha(r))\\
		&=S(\alpha(e_\lambda))\phi_0(\sigma(e_\lambda))T(\alpha(r))
		=	\phi_0\bigl(S(\alpha(e_\lambda))\bigr) \sigma(e_\lambda)T(\alpha(r))\\
		&=\phi_0\bigl(\sigma(\alpha(e_\lambda e_\lambda r))\bigr)
		=\phi_0\bigl(\sigma(\alpha(r))\bigr)
		=\phi_0\bigl(\sigma(e_\lambda\alpha(r))\bigr)
		=\phi_0(\sigma(e_\lambda))\sigma(\alpha(r))\\
		&=T^n(u_\lambda)\sigma(\alpha(r))=T^n(u_\lambda\alpha(r)),
		\end{split}
	\end{equation*}
	from which it follows that $\alpha(u_\lambda r)=u_\lambda\alpha(r)$. Thus $\alpha(ur)=u\alpha(r)$. 
	
	Hence $\alpha$ is inner with periodicity $n$ in this case.
\end{proof}

By combining Theorem \ref{thm:simple} and Corollary \ref{cor:supermax} with Remark \ref{remark:strict}, Proposition \ref{prop:inner}, and the fact that $\mathcal{O}_{(P,Q,\psi)}(R)$ is isomorphic to the crossed product $R\times_\alpha\Z$ of $R$ by $\alpha$ when $\alpha$ is an automorphism, and to the fractional skew monoid ring $R[t_+,t_-;\alpha]$ when $R$ is unital and $\alpha$ is an injective homomorphism such that $\alpha(R)=eRe$ for some idempotent $e\in R$, we get the following two corollaries.

\begin{cor} \label{cor:fractional}
	Let $R$ be a unital ring and let $\alpha:R\to R$ be an injective ring homomorphism such that $\alpha(R)=eRe$ for some idempotent $e\in R$. Then the following two statements are equivalent: 
	\begin{enumerate}
		\item The fractional skew monoid ring $R[t_+,t_-;\alpha]$ is simple. 
		\item The homomorphism $\alpha$ is outer and the only strongly $\alpha$-invariant ideals in $R$ are $\{0\}$ and $R$. 
	\end{enumerate}
\end{cor}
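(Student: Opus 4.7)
The plan is to assemble the equivalence by stringing together the isomorphism $R[t_+,t_-;\alpha]\cong\mathcal{O}_{(P,Q,\psi)}(R)$ recalled in the paragraph before the corollary, together with Theorem \ref{thm:simple}, Corollary \ref{cor:supermax}, Proposition \ref{prop:inner}, and Remark \ref{remark:strict}. Because $R$ is unital, Remark \ref{remark:strict} guarantees that every ring homomorphism $R\to R$ (in particular every power $\alpha^n$) is strict, so the hypothesis needed for the hard direction of Proposition \ref{prop:inner} will be available once we know $R$ is super maximal.

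First I would translate the simplicity question: $R[t_+,t_-;\alpha]$ is simple if and only if $\mathcal{O}_{(P,Q,\psi)}(R)$ is simple, and by Theorem \ref{thm:simple} (equivalence of \eqref{item:16} and \eqref{item:19}, with $J=R$) this happens if and only if $R$ satisfies condition (L) with respect to $(P,Q,\psi)$ and is a super maximal $\psi$-compatible ideal. Corollary \ref{cor:supermax} then rewrites ``$R$ is a super maximal $\psi$-compatible ideal'' as ``the only strongly $\alpha$-invariant ideals of $R$ are $\{0\}$ and $R$''. So the corollary is reduced to showing that, under the additional assumption that the only strongly $\alpha$-invariant ideals of $R$ are $\{0\}$ and $R$, condition (L) for $R$ is equivalent to $\alpha$ being outer.

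For the forward implication within this reduction, assume $R[t_+,t_-;\alpha]$ is simple. Then $R$ satisfies condition (L), and the contrapositive of the implication $\eqref{item:221}\implies\eqref{item:223}$ in Proposition \ref{prop:inner} (which is proved unconditionally) gives at once that $\alpha$ is outer. For the reverse implication, assume $\alpha$ is outer and the only strongly $\alpha$-invariant ideals of $R$ are $\{0\}$ and $R$. By Corollary \ref{cor:supermax}, $R$ is a super maximal $\psi$-compatible ideal, and by Remark \ref{remark:strict} each $\alpha^n$ is strict (since $R$ is unital). These are exactly the additional hypotheses under which Proposition \ref{prop:inner} asserts the equivalence of \eqref{item:221}, \eqref{item:222} and \eqref{item:223}; so outerness of $\alpha$ forces $R$ to satisfy condition (L). Applying Theorem \ref{thm:simple} again yields that $\mathcal{O}_{(P,Q,\psi)}(R)\cong R[t_+,t_-;\alpha]$ is simple.

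I do not anticipate any real obstacle: every ingredient is already in place and the argument is a bookkeeping exercise. The only thing to be careful about is to invoke the correct direction of Proposition \ref{prop:inner} in each implication, since the implication \eqref{item:223}$\implies$\eqref{item:221} requires the super maximality and strictness hypotheses, whereas \eqref{item:221}$\implies$\eqref{item:223} does not; this is why simplicity (which supplies super maximality) must be used to reach outerness in one direction, and outerness plus super maximality must be combined before condition (L) can be deduced in the other.
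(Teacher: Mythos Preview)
Your proposal is correct and is precisely the argument the paper intends: the paper simply says that the corollary follows by combining Theorem~\ref{thm:simple}, Corollary~\ref{cor:supermax}, Remark~\ref{remark:strict}, Proposition~\ref{prop:inner}, and the isomorphism $R[t_+,t_-;\alpha]\cong\mathcal{O}_{(P,Q,\psi)}(R)$, and you have faithfully unpacked that combination, including the care about which directions of Proposition~\ref{prop:inner} need the super maximality and strictness hypotheses.
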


\begin{cor} \label{cor:auto}
	Let $R$ be a ring with local units and let $\alpha:R\to R$ be a ring automorphism. Then the following two statements are equivalent: 
	\begin{enumerate}
		\item  The crossed product $R\times_\alpha\Z$ is simple. 
		\item  The automorphism $\alpha$ is outer and the only strongly $\alpha$-invariant ideals in $R$ are $\{0\}$ and $R$. 
	\end{enumerate}
\end{cor}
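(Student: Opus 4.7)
The plan is to deduce this corollary by combining Theorem~\ref{thm:simple} with the structural results established earlier in this section. Since $\alpha$ is an automorphism, the relative Cuntz--Pimsner ring $\mathcal{O}_{(P,Q,\psi)}(R)$ is isomorphic to the crossed product $R\times_\alpha\Z$, so simplicity of $R\times_\alpha\Z$ is equivalent to simplicity of $\mathcal{O}_{(P,Q,\psi)}(R)$ with the choice $J=R$.

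First I would apply the equivalence of conditions \eqref{item:16} and \eqref{item:19} in Theorem~\ref{thm:simple}: the ring $\mathcal{O}_{(P,Q,\psi)}(R)$ is simple if and only if $R$ (playing the role of $J$) satisfies condition~(L) and is a super maximal $\psi$-compatible ideal. The super maximality part can then be translated directly via Corollary~\ref{cor:supermax}, which identifies super maximality of $R$ with the condition that the only strongly $\alpha$-invariant ideals of $R$ are $\{0\}$ and $R$.

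For the condition~(L) part, I would invoke Proposition~\ref{prop:inner}. That proposition supplies the equivalence between $R$ failing condition~(L) and $\alpha$ being inner of some periodicity, but only under the additional hypotheses that $R$ is a super maximal $\psi$-compatible ideal and that each $\alpha^n$ is strict. The first hypothesis is already in force from the previous step. The second is automatic in the present setting: by Remark~\ref{remark:strict} every automorphism is strict, and each iterate $\alpha^n$ is again an automorphism of $R$. Hence condition~(L) is equivalent to $\alpha$ being outer.

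Assembling these two translations yields the claimed equivalence. I do not expect any serious obstacle: the substantive work has been packaged into Proposition~\ref{prop:inner} and Corollary~\ref{cor:supermax}, so this corollary reduces to verifying that the hypotheses of those results are automatically satisfied whenever $\alpha$ is an automorphism and then chaining the equivalences.
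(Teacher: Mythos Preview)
Your proposal is correct and follows precisely the argument the paper gives: it derives the corollary by combining Theorem~\ref{thm:simple}, Corollary~\ref{cor:supermax}, Proposition~\ref{prop:inner}, and Remark~\ref{remark:strict}, together with the identification of $\mathcal{O}_{(P,Q,\psi)}(R)$ with $R\times_\alpha\Z$ when $\alpha$ is an automorphism. The only minor point worth making explicit is that the super maximality hypothesis needed for the direction \eqref{item:223}$\Rightarrow$\eqref{item:221} of Proposition~\ref{prop:inner} is available in \emph{both} directions of the corollary (from simplicity via Theorem~\ref{thm:simple} in one direction, and from the strongly $\alpha$-invariant ideal condition via Corollary~\ref{cor:supermax} in the other), which your sketch implicitly uses but does not quite spell out.
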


We end by noticing that when $\alpha$ is an automorphism, the condition of $\alpha$ being outer is equivalent with the seemingly stronger, and perhaps more familiar, condition that $\alpha$ is \emph{strongly outer}.

\begin{definition}
	Let $n\in\N$ and let $R$ be a ring with local units and $\alpha:R\to R$ a ring automorphism. If there exists an invertible element $u\in\mathcal{M}(R)$ such that $\alpha^n(r)=uru^{-1}$ for all $r\in R$, then $\alpha$ is said to be \emph{weakly inner with periodicity $n$}. If $\alpha$ is not weakly inner of any periodicity, then it is said to be \emph{strongly outer}.
\end{definition} 

\begin{prop}
	Let $R$ be a ring with local units and let $\alpha:R\to R$ a ring automorphism. Then $\alpha$ is outer if and only if it is strongly outer.
\end{prop}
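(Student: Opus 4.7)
My plan is to prove both implications by contrapositive. The direction ``strongly outer $\Rightarrow$ outer'' is immediate: if $\alpha$ is inner with periodicity $n$ in the sense of Definition~\ref{def:inner}, then Remark~\ref{remark:inner} gives $v=u^{-1}$, so $u$ is invertible with $\alpha^n(r)=uru^{-1}$ for all $r\in R$, which is precisely the definition of $\alpha$ being weakly inner with periodicity $n$.

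For the non-trivial direction I assume $\alpha$ is weakly inner with periodicity $n$ via an invertible $u\in\mathcal{M}(R)$ satisfying $\alpha^n(r)=uru^{-1}$, and plan to produce an invertible $u'\in\mathcal{M}(R)$ implementing $\alpha^{n^2}$ together with $\bar\alpha(u')=u'$, where $\bar\alpha$ denotes the canonical extension of $\alpha$ to a ring automorphism of $\mathcal{M}(R)$ (the extension exists and is unique because $R$ is an essential ideal of $\mathcal{M}(R)$ with local units). Since $\bar\alpha(u')=u'$ is equivalent to $\alpha(u'r)=u'\alpha(r)$ for all $r\in R$, Remark~\ref{remark:inner} will then yield that $\alpha$ is inner with periodicity $n^2$, contradicting outerness.

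The construction proceeds in two steps. First, the automorphisms $\bar\alpha^n$ and conjugation by $u$ on $\mathcal{M}(R)$ agree on the essential ideal $R$, hence coincide, so $\bar\alpha^n(u)=u$; and for every $k\ge 0$ and $r\in R$,
\[
\bar\alpha^k(u)\,r\,\bar\alpha^k(u)^{-1}=\bar\alpha^k\bigl(u\,\bar\alpha^{-k}(r)\,u^{-1}\bigr)=\bar\alpha^k\bigl(\alpha^n(\bar\alpha^{-k}(r))\bigr)=\alpha^n(r),
\]
so every $\bar\alpha^k(u)$ implements the same inner automorphism of $R$. Consequently each $\bar\alpha^k(u)\bar\alpha^j(u)^{-1}$ commutes with all of $R$ and is therefore central in $\mathcal{M}(R)$, which forces the family $\{\bar\alpha^k(u)\}_{k\ge 0}$ to be pairwise commuting. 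Second, I set $u':=u\,\bar\alpha(u)\,\bar\alpha^2(u)\cdots\bar\alpha^{n-1}(u)$, which is invertible as a product of invertibles, and the commutativity of the factors together with $\bar\alpha^n(u)=u$ yields
\[
\bar\alpha(u')=\bar\alpha(u)\,\bar\alpha^2(u)\cdots\bar\alpha^{n-1}(u)\,\bar\alpha^n(u)=u'.
\]
Applying the displayed conjugation identity successively for $k=n-1,n-2,\dots,0$ then gives $u'r(u')^{-1}=\alpha^{n^2}(r)$, completing the argument.

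The main obstacle is that a chosen implementing element $u$ need not itself satisfy $\bar\alpha(u)=u$; the failure is measured by the central-valued ``cocycle'' $\bar\alpha(u)u^{-1}\in Z(\mathcal{M}(R))$. The key trick is to pass to the orbit product $u'=\prod_{k=0}^{n-1}\bar\alpha^k(u)$, whose $\bar\alpha$-invariance is automatic from the mutual commutation of the orbit together with the relation $\bar\alpha^n(u)=u$. This costs us only a replacement of the periodicity $n$ by $n^2$, which is harmless because outerness forbids inner behaviour at \emph{every} periodicity.
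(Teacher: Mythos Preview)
Your proof is correct and follows essentially the same route as the paper: extend $\alpha$ to an automorphism $\hat\alpha$ of $\mathcal{M}(R)$, deduce $\hat\alpha^n(u)=u$, form the orbit product $u'=\prod_{k=0}^{n-1}\hat\alpha^k(u)$, and check that $u'$ implements $\alpha^{n^2}$ together with $\alpha(u'r)=u'\alpha(r)$. Your verification of $\bar\alpha(u')=u'$ via the centrality (hence mutual commutation) of the $\bar\alpha^k(u)\bar\alpha^j(u)^{-1}$ is a mild streamlining of the paper's direct index-shifting computation, but the overall strategy is identical.
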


\begin{proof}
	It follows from Remark \ref{remark:inner} that if $\alpha$ is strongly outer, then it is also outer.
	
	Suppose that $\alpha$ is not strongly outer. Then there exist $n\in\N$ and an invertible element $u\in\mathcal{M}(R)$ such that $\alpha^n(r)=uru^{-1}$ for all $r\in R$. If $x=(f,g)\in\mathcal{M}(R)$ where $(f,g)$ is a double centralizer, then we let $\hat{\alpha}(x)$ denote the double centralizer $(\alpha\circ f\circ\alpha^{-1},\alpha\circ g\circ\alpha^{-1})$. It is easy to check that $x\mapsto\hat{\alpha}(x)$ defines an automorphism $\hat{\alpha}$ of $\mathcal{M}(R)$ and that $\hat{\alpha}^n(x)=uxu^{-1}$ for all $x\in\mathcal{M}(R)$. In particular $\hat{\alpha}^n(u)=uuu^{-1}=u$ and $\hat{\alpha}^n(u^{-1})=uu^{-1}u^{-1}=u^{-1}$. Let 
	\begin{equation*}
		u'=u\hat{\alpha}(u)\dots\hat{\alpha}^{n-1}(u)\text{ and } v'=\hat{\alpha}^{n-1}(u^{-1})\dots\hat{\alpha}(u^{-1})u^{-1}.
	\end{equation*}
	Then $v'u'=1$. If $r\in R$, then
	\begin{equation*}
		\begin{split}
		\alpha(u'r)&=\hat{\alpha}(u')\alpha(r)=\hat{\alpha}
		\bigl(u\hat{\alpha}(u)\dots\hat{\alpha}^{n-1}(u)\bigr)\alpha(r)	
		=\hat{\alpha}(u)\hat{\alpha}^2(u)\dots\hat{\alpha}^n(u)\alpha(r)\\
		&=\hat{\alpha}^{n+1}(u)\hat{\alpha}^{n+2}(u)\dots\hat{\alpha}^{2n}(u)\alpha(r)
		=\hat{\alpha}^n
		\bigl(\hat{\alpha}(u)\hat{\alpha}^2(u)\dots\hat{\alpha}^n(u)\bigr)\alpha(r)\\
		&=u\hat{\alpha}(u)\hat{\alpha}^2(u)\dots\hat{\alpha}^n(u)u^{-1}\alpha(r)
		=u\hat{\alpha}(u)\hat{\alpha}^2(u)\dots\hat{\alpha}^{n-1}(u)uu^{-1}\alpha(r)
		=u'\alpha(r)	
	\end{split}
	\end{equation*}
	and
	\begin{equation*}
		\begin{split}
		u'rv'&=u\hat{\alpha}(u)\dots\hat{\alpha}^{n-1}(u)r
		\hat{\alpha}^{n-1}(u^{-1})\dots\hat{\alpha}(u^{-1})u^{-1}\\
		&=\hat{\alpha}^n\bigl(\hat{\alpha}(u)\dots\hat{\alpha}^{n-1}(u)r
		\hat{\alpha}^{n-1}(u^{-1})\dots\hat{\alpha}(u^{-1})\bigr)\\
		&=\hat{\alpha}(u)\dots\hat{\alpha}^{n-1}(u)\alpha^n(r)
		\hat{\alpha}^{n-1}(u^{-1})\dots\hat{\alpha}(u^{-1})\\
		&=\hat{\alpha}(u)\dots\hat{\alpha}^{n-2}(u)\alpha^{n+1}(r)
		\hat{\alpha}^{n-2}(u^{-1})\dots\hat{\alpha}(u^{-1})\\
		&\phantom{=\hat{\alpha}(u)\alpha^{(n-1)n}(r)}\vdots\\
		&=\hat{\alpha}(u)\alpha^{(n-1)n}(r)\hat{\alpha}(u^{-1})=\alpha^{n^2}(r).
	\end{split}
	\end{equation*}
	Thus $\alpha$ is inner with periodicity $n^2$ and is therefore not outer.
\end{proof}
\section*{Acknowledgments}

Part of this work was done during visits of the third author to the Institut for Matematik og Datalogi, Syddansk Universitet and to the Institut for Matematiske Fag, K\o benhavns Universitet (Denmark). The third author thanks both host centers for their kind hospitality.

\end{document}